\documentclass{article}
\usepackage{amssymb}
\usepackage{amsmath}
\usepackage{amsthm}

\usepackage{amsthm}

\newtheoremstyle{boldremark}% název stylu
  {\topsep}    % mezera nad
  {\topsep}    % mezera pod
  {\normalfont}  % font těla
  {}           % odsazení
  {\bfseries}  % font nadpisu
  {.}          % itečka po nadpisu
  {0.5em}      % mezera po nadpisu
  {}           % specifikace nadpisu

\theoremstyle{boldremark}
\usepackage{amsmath, amsthm}
\newtheorem{theorem}{Theorem}[section] 
\newtheorem{proposition}{Proposition}[section]

\newtheorem{corollary}[theorem]{Corollary}

\newtheorem{remark}[theorem]{Remark}

\usepackage[english]{babel}

\usepackage[letterpaper,top=2cm,bottom=2cm,left=3cm,right=3cm,marginparwidth=1.75cm]{geometry}

\renewcommand\[{\begin{equation}}
\renewcommand\]{\end{equation}}

\allowdisplaybreaks[4]
\usepackage{amsmath}
\usepackage{comment}
\usepackage{graphicx}
\usepackage[colorlinks=true, allcolors=blue]{hyperref}

  \theoremstyle{definition}

  \numberwithin{equation}{section}

\begin{document}
\title{Analytic continuation of weighted $H$-harmonic Bergman spaces}

\author{Matěj Moravík}

\maketitle
\begin{abstract}
We provide a partial answer to Problems 1 and 2 raised in the recent article by Blaschke et al.,
concerning the analytic continuation of weighted $H$-harmonic Bergman spaces. These are spaces
of functions annihilated by the Möbius-invariant Laplacian on the unit ball. More precisely, we identify some of the discrete Wallach sets and show, among others, that structure depends on the parity of the dimension.
\end{abstract}

\section{Introduction}
Let $B^n$ be the open unit ball in $\mathbb{R}^n$, $n > 2$. The orthogonal transformations
\begin{equation*}
    x \mapsto Ux \quad x \in \mathbb{R}^n,\; U \in O(n),
\end{equation*}
map $B^n$ and its boundary $\partial B^n$ (unit sphere) onto themselves, and so do M{\"o}bius transformations
\begin{equation*}
    \phi_a(x) := \frac{a|x-a|^2 + (1-|a|^2)(a-x)}{1 - 2\langle x, a \rangle + |x|^2|a|^2}
\end{equation*}
interchanging the origin $\textbf{0} \in B^n$ with the point $a \in B^n$; here $\langle x,y\rangle$ denotes the usual scalar product.\\
The group $G$ generated by the $\phi_a,\;a \in B^n,$ and $U \in O(n)$ is called the Möbius group of $B^n$.
Let $\mu$ denote the normalized Lebesgue measure on $B^n$, \emph{i.e.} $\mu(B^n) = 1$. Similarly, $\sigma$ will denote the surface measure normalized so that $\sigma(\partial B^n) = 1.$\\
The hyperbolic Laplacian is then defined as \begin{equation} \label{eq1}
    \Delta_hf(x) := \Delta(f \:\circ \: \phi_x)(0), \quad f\in C^2(B^n), \: x\in B^n;
\end{equation}
it follows from a direct calculation that  \begin{equation*}
    \Delta_h f(x) = (1-|x|^2)[(1- |x|^2)\Delta f(x) + 2(n-2)\langle x,\nabla f(x) \rangle ],
\end{equation*}
where $\Delta f =  \frac{\partial^2 f}{\partial x_1^2} + ...+ \frac{\partial^2 f}{\partial x_n^2}$ and $\nabla f = \big(\frac{\partial f}{\partial x_1} , ...,\frac{\partial f}{\partial x_n}\big)$.
Functions on $B^n$ annihilated by $\Delta_h$ are called hyperbolic harmonic, or $H$-harmonic for short. The vector space of all $H$-harmonic functions on the unit ball $B^n$ will be denoted by $\mathcal{H}$.
For any $s > -1$, one can consider the $\textit{weighted $H$-harmonic Bergman space}$
\begin{equation} \label{Bergman}
    \mathcal{H}_s := \{f \in L^2(B^n, d\rho_s), \quad f \text{ is $H$-harmonic} \},
\end{equation}
where $$d\rho_s(x) = \frac{\Gamma(\frac{n}{2}+s+1)}{\pi^{\frac{n}{2}}\Gamma(s+1)}(1-|x|^2)^sd\mu(x)$$
is a rotation invariant probability measure.

\begin{comment}
    It follows by a standard argument
that the point evaluations $f \mapsto f(x)$ at any $x \in B^n$ are continuous linear functionals on each $\mathcal{H}_s$, $s > -1$, making $\mathcal{H}_s$ into the \textit{weighted $H$-harmonic Bergman kernel} space, with reproducing kernel $K_s(x, y)$ defined on
$B^n \times B^n$, $H$-harmonic in both variables such that
$$
f(x) = \int_{B^n} f(y)K_s(x, y) \, d\rho_s(y) \qquad \forall x \in B^n, \forall f \in \mathcal{H}_s.
$$
$K_h^s(x,y)$ is explicitly given by
$$
K_h^s(x, y) = \sum_{m=0}^\infty \frac{S_m(|x|^2)\, S_m(|y|^2)}{I_m(s)}\, Z_m(x, y),
$$
where the function $S_m(t)$ is defined by
$$
S_m(t) := \frac{(n-1)_m}{\left(\frac{n}{2}\right)_m} \, {}_2F_1\left(\begin{matrix} m,\, 1 - \frac{n}{2} \\ m + \frac{n}{2} \end{matrix} ;\, t \right),
$$
and $I_m(s)$ is given by
$$
I_m(s) := \frac{\Gamma\left(\frac{n}{2}+s+1\right) }{\Gamma\left(\frac{n}{2}\right) \Gamma(s + 1)} \int_0^1 t^{m + \frac{n}{2} - 1} (1 - t)^s S_m(t)^2 \, dt.
$$
$$
K_h^s(x, y) = \sum_{m=0}^\infty \frac{S_m(|x|^2)\, S_m(|y|^2)}{I_m(s)}\, Z_m(x, y),
$$
Here, \(Z_m(x, y)\) denotes the zonal harmonic of degree \(m\), and $(x)_m=x(x+1)\dots(x+m-1)$ is the usual Pochhammer symbol.\\
\end{comment}
We consider an analytic continuation and the ``residue spaces'' \(\mathcal{H}_s\)
for \(s\leq-1\), in a sense that will be made precise below.

We first recall the analogous situation for weighted Bergman spaces of \textit{holomorphic}, rather than $H$-harmonic, functions on the unit ball $\mathbf{B}^n \subset \mathbb{C}^n$.
Let
$$
\mathcal{A}_s := \{ f \in L^2(\mathbf{B}^n, d\rho_s) : f \text{ is holomorphic on } \mathbf{B}^n \},
$$
where the measure $d\rho_s$ is the same as above. It is well known, that the spaces $\mathcal{A}_s$ are reproducing kernel Hilbert spaces and the reproducing kernels are given by 
\begin{equation}\label{Hol_Kernel}
K_s^{\mathrm{hol}}(x, y) = (1 - \langle x, y \rangle)^{-n-1-s}.
\end{equation}
Weighted Bergman kernels $K_s^{hol}(x,y)$, $s > -1,$ continue to be positive definite in the sense of Aronszajn \cite{Aro} for all $s \geq -n-1$, thus yielding an analytic continuation of $\mathcal{A}_s,$ for $s \in [-n-1, + \infty)$. One calls the interval $[-n-1, +\infty)$ the \textit{Wallach set} of $\mathbf{B}^n$. 
For general case of holomorphic functions defined on bounded symmetric domains, see~\cite{Rossi}.

When $s = -n-1$, the \eqref{Hol_Kernel} reduces to constant and the corresponding Hilbert space is trivial.
However, we can consider the space that arises as the ``residue'' of $\mathcal{A}_s$ at $s = -n-1$, namely, 
$$
\lim_{s \rightarrow-n-1}\frac{K^{hol}_s(x,y)-1}{s+n+1} = \ln{\frac{1}{1-\langle x, y\rangle}},
$$
which is a positive definite function on $\textbf{B}^n \times \textbf{B}^n,$ and the associated reproducing kernel Hilbert space becomes the \textit{Dirichlet space} on $B^n$; we say that $-n-1$ belongs to the \textit{discrete Wallach set}. \\

We now define the analogues of residue spaces in the case of $H$-harmonic functions on the real ball.\\
For an integer $m \geq 0$, let $\mathcal{H}^m$ be the space of restrictions to the unit sphere $\partial B^n$ of harmonic homogeneous polynomials on $\mathbb{R}^n$ of degree $m$. Then we have the following Peter-Weyl decomposition 
\begin{equation}
    L^2(\partial B^n, d\sigma) = \bigoplus_{m=0}^\infty \mathcal{H}^m
\end{equation}
under the action of the orthogonal group $O(n)$ of rotations of $\mathbb{R}^n$ \cite[Chapter~5]{Axler}.
Similarly the Bergman space of $H$-harmonic functions $\mathcal{H}_s$ admits a Peter-Weyl decomposition given by
\begin{equation}
    \mathcal{H}_s = \bigoplus_{m=0}^\infty \textbf{{H}}^m,
\end{equation}
where $\textbf{H}^m$ is the space of ``solid harmonics"
$$
H^m = \{ f \in C(\overline{B^n}) : f \text{ is } H\text{-harmonic on } B^n
\text{ and } f|_{\partial B^n} \in \mathcal{H}^m \}
$$
and the norm of $f = \sum_mf_m$, $f_m\in\textbf{H}^m$, is given by 
\begin{equation}
    \|f\|_s^2 = \sum_{m=0}^\infty I_m(s) \|f_m\|^2_{\partial B^n},
\end{equation}
where the coefficients $I_m(s)$ are equal to
\begin{equation}
I_m(s) := \frac{\Gamma\left(\frac{n}{2}+s+1\right) }{\Gamma\left(\frac{n}{2}\right) \Gamma(s + 1)} \int_0^1 t^{m + \frac{n}{2} - 1} (1 - t)^s S_m(t)^2 \, dt,
\end{equation}
with 
$$
S_m(t) := \frac{(n-1)_m}{\left(\frac{n}{2}\right)_m} \, {}_2F_1\left(\begin{matrix} m,\, 1 - \frac{n}{2} \\ m + \frac{n}{2} \end{matrix} ;\, t \right),
$$
and
$$
f_m(r\eta)=S_m(r^2)r^m f_m(\eta), 
\qquad f_m|_{\partial B^n}\in \mathcal{H}^m, 
\qquad 0\le r\le 1,\ \eta\in \partial B^n,
$$
where $_2F_1$ denotes the Gauss hypergeometric function and $(a)_m := (a)(a+1)\cdots(a+m-1)$ is the usual Pochhammer symbol. $I_m(s)=1$ corresponds to the classical Hardy space.\\
For a detailed exposition of the facts stated above, the reader is referred to \cite[Chapter~10]{Stoll2016} or \cite{Stoll2019}.

In \cite[Proposition 1]{BEY2025}  it was shown that $I_m(s)$ admits an analytic continuation for $s \le -1$, more precisely, the authors showed that $(\Gamma(n+s)\Gamma(\frac{n}{2} +1 +s) \Gamma(2n-1+s)^2)^{-1} I_m(s)$ extends to an analytic function on the entire complex plane. 

Suppose that $s_0$ is a pole of $I_m(s)$ of order $k$. Then we can consider the following space of $H$-harmonic functions on $B^n$:
\begin{equation}
    \mathcal{H}^{\#s_0}_r
    =
    \left\{
    f = \sum_{m=1}^{\infty} f_m \ \text{$H$-harmonic on $B^n$}
    :
    \|f\|_{\mathcal{H}_r^{\#s_0}}^2 < \infty
    \right\},
\end{equation}
where
\begin{equation}\label{norma}
    \|f\|_{\mathcal{H}_r^{\#s_0}}^2
    =
    \pm\sum_{m=0}^\infty c_{-r,m}(s_0)\,
    \|f_m\|_{\partial B^n}^2,
\end{equation}
Here the sign before the sum is chosen so that the resulting quantity defines a seminorm,
\(c_{-r,m}(s_0)\) denotes the \((-r)\)-th coefficient in the Laurent expansion of \(I_m(s)\) at \(s=s_0\),
and \(f_m \in \mathbf{H}^m\) is the \(m\)-th term in the Peter--Weyl decomposition of \(f\).
The expression in \eqref{norma} defines a seminorm whenever
\(c_{-r,m}(s_0) \geq 0\) for all \(m \geq 0\), or
\(c_{-r,m}(s_0) \leq 0\) for all \(m > 0\).
This leads to the following definition of the discrete Wallach sets.
\begin{equation*}
    \mathcal{W}^r_d := \{s \in \mathbb{C}\,:\, \text{$I_m(s)$ has pole at $s$, and $c_{-r,m}(s)$ are all nonnegative or all nonpositive, $\forall m \ge 0$} \}.
\end{equation*}
For example it was shown in \cite[Proposition~1]{BEY2025} that $-n,\, -n-1 \in \mathcal{W}_1^d,$ furthermore $\mathcal{H}^{\#-n}_1$ and $\mathcal{H}^{\#-n-1}_1$ correspond to the spaces with unique Möbius invariant semi-inner product.\\
For an analogous situation in the case of $M$-harmonic functions see \cite{EYM}.

In \cite{Ureyen2023} the following estimate for $I_m(s)$, was established:
\begin{equation}\label{Urye}
    c_s < m^{s+1} I_m(s) < C_s \quad \forall m \in \mathbb{N} \quad \text{and $s \geq -1$},
\end{equation}
for some absolute constants $C_s > c_s > 0.$ 
However, somewhat surprisingly, it was shown in \cite{BEY2025} that
$I_m(-2)=1$ for all $m\geq0$. Hence, in the case $s=-2$,
the space $\mathcal{H}_s$ reduces to the Hardy space, and the estimate
\eqref{Urye} fails. This is a situation that has no parallel in the case of holomorphic or $M$-harmonic functions. 
This observation led the authors of \cite{BEY2025} to introduce the spaces
\begin{equation}\label{definiceH}
\mathcal{H}_{\#s}
:=
\left\{
f = \sum_{m=0}^{\infty} f_m
\,:\,
f \text{ is } H\text{-harmonic on } B^n
\text{ and }
\|f\|_{\#s}^{2} < \infty
\right\},
\end{equation}
where
$$
\|f\|_{\#s}^{2}
:=
\sum_{m=0}^{\infty}
(m+1)^{-s-1}
\|f_m\|_{\partial B^n}^{2}.
$$
Here again \(f=\sum_{m=0}^{\infty} f_m\) denotes the Peter--Weyl decomposition of \(f\).
They then asked for which $s<-1$ the spaces $\mathcal{H}_{\#s}$ and
$\mathcal{H}_s$ coincide, or, equivalently, for which $s < -1$ the estimate \eqref{Urye} holds (with $I_m(s)$ possibly replaced by $-I_m(s)).$

In this article, we derive an analytic continuation of $I_m(s)$ in terms
of the generalized hypergeometric series ${}_4F_3$.
This allows us to give a partial answer to the questions raised in \cite{BEY2025} concerning the
characterisation of $\mathcal{W}_k^d$. 
We show that $\mathcal{W}_k^d$
depends on the parity of $n$, which is again a situation with no
counterpart in the holomorphic or $M$-harmonic case.
We show that, for even $n \geq 6$, one has
$-n+1,-n+2 \in \mathcal{W}_1^d$, and that
$\mathcal{W}_r^d=\emptyset$ for $r>1$. For $n=4$, only $-3 \in \mathcal{W}_1^1.$\\
For odd $n$, we show that
$-n+\frac{1}{2},-n-\frac{1}{2}\in \mathcal{W}_1^d$,
$$
\mathcal{W}_2^d=\{-2n+1-k:\ k = 0,1,2,\dots\},
$$
and $\mathcal{W}_r^d=\emptyset$ for $r>2$.\\
In the same article, the authors asked about the behaviour of $I_m(s)$
for $s<-1$ as $m\to\infty$. We prove that, for odd $n$,
$m^{s+1}I_m(s)$ converges to a nonzero limit as $m\to\infty$ whenever
$s+n-1\geq0$, $s\notin\{-2,-3,\ldots\}$, and
$-\frac n2-s\neq 0,1,2,\dots$. We also show that the condition
$s+n-1\geq0$ can be omitted when $n$ is even.
\section{Notation and preliminaries}
The notation ${}_p F_q$ denotes the generalized hypergeometric series, defined as
$$
{}_p F_q\left( \begin{matrix} a_1, a_2, \dots, a_p \\[3pt] b_1, b_2, \dots, b_q \end{matrix} \;;\; z \right) = \sum_{\ell=0}^{\infty} \frac{(a_1)_\ell (a_2)_\ell \cdots (a_p)_\ell}{(b_1)_\ell (b_2)_\ell \cdots (b_q)_\ell} \frac{z^\ell}{\ell!},
$$
provided that no $b_j$ is a non-positive integer,
and $(a)_\ell = \frac{\Gamma(a+\ell)}{\Gamma(a)} = a(a+1)\cdots(a+\ell-1)$ denotes the usual Pochhammer symbol (rising factorial).\\
The $_{p+1}F_{p}$ is absolutely convergent for $z=1$, provided that $\sum_{j=1}^p b_j - \sum_{i=1}^{p+1} a_i > 0$. 

We also encounter the following hypergeometric series:
$$
{}_p F_q\left( \begin{matrix} -m, a_2, \dots, a_p \\[3pt] -n, b_2, \dots, b_q \end{matrix} \;;\; z \right).
$$
For positive integers $m$ and $n$ with $m < n$, this will always denote, without explicitly mentioning it, the finite sum from $\ell = 0$ to $m$.

The hypergeometric series ${}_{p+1}F_p$ is called \textit{balanced} (\textit{Saalschützian}) when $\sum_{j=1}^p b_j - \sum_{i=1}^{p+1} a_i = 1$.
We also make use of the following identity:
\begin{align}\label{TheIdentity}
   &\lim_{\epsilon \to 0} \epsilon \, {}_p F_q\left( \begin{matrix} a_1, \dots, a_p \\[3pt] b_1, \dots, b_{q-1}, -m+\epsilon \end{matrix} \;;\; z \right) \nonumber \\
   &\quad = \frac{(-1)^m z^{m+1}}{m! (m+1)!} \frac{\prod_{i=1}^p (a_i)_{m+1}}{\prod_{j=1}^{q-1} (b_j)_{m+1}} \, {}_p F_q\left( \begin{matrix} a_1+m+1, \dots, a_p+m+1 \\ b_1+m+1, \dots, b_{q-1}+m+1, m+2 \end{matrix} \;;\; z \right),
\end{align}
provided that no $a_i$ is a negative integer greater than $-m$.
This can be deduced from the fact that $\lim_{\epsilon \to 0}\frac{\epsilon}{(-m+\epsilon)_\ell}$ equals $0$ for $0\leq \ell \leq m$, and $\frac{(-1)^m}{m!\,(\ell-m-1)!}$ for $\ell > m$.

We also need a two-variable generalization of the hypergeometric series, 
namely the \textit{Kampé de Fériet} function:
\[
F^{p:q;r}_{s:t;u} \left(
\begin{matrix}
a_1,\dots,a_p : b_1,\dots,b_q ; b'_1,\dots,b'_r \\[3pt]
c_1,\dots,c_s : d_1,\dots,d_t ; d'_1,\dots,d'_u
\end{matrix}
; x,y
\right)
=
\sum_{m=0}^{\infty}\sum_{\ell=0}^{\infty}
\frac{\prod_{i=1}^{p}(a_i)_{m+\ell}}
     {\prod_{i=1}^{s}(c_i)_{m+\ell}}
\frac{\prod_{i=1}^{q}(b_i)_m}
     {\prod_{i=1}^{t}(d_i)_m}
\frac{\prod_{i=1}^{r}(b'_i)_\ell}
     {\prod_{i=1}^{u}(d'_i)_\ell}
\frac{x^m y^\ell}{m!\ell!}.
\]
In particular, we will need the special case
$$
F^{0:3;3}_{1:1;1} \left(
\begin{matrix}
- : a\,,b\,,c\, ;\,a'\,,b'\,,c' \\[3pt]
d\, : e\,; e'
\end{matrix}
; x,y
\right).
$$
This series is absolutely convergent at \(x=y=1\), provided that
$$
d+e-a-b-c>0
\qquad\text{and}\qquad
d+e'-a'-b'-c'>0,
$$
see~\cite{pitre1996}.

We shall also use the big-\(\mathcal{O}\) notation. We write
$$
a_m = \mathcal{O}(b_m)
$$
if there exists a constant \(C>0\) such that
$$
|a_m| \leq C |b_m|
$$
for all sufficiently large \(m\).

In order to make the notation more compact, we shall sometimes use the abbreviation
$$
\frac{\Gamma(a_1)\cdots\Gamma(a_n)}
     {\Gamma(b_1)\cdots\Gamma(b_m)}
=
\Gamma\!\left(
\begin{matrix}
a_1,\dots,a_n
\\
b_1,\dots,b_m
\end{matrix}
\right).
$$
\section{Analytic continuation of $I_m(s)$}

  \begin{proposition}\label{Pro 3.1}
      When $n$ is even we have:
      \begin{align} \label{sude}
I_m(s) &= (n-1)_m^2  \Gamma\!\left(
\begin{matrix}
\frac{n}{2},\; s+2n-1,\; m+\frac{n}{2}-1,\; s+n
\\
m,\; m+n-1,\; s+\frac{3n}{2},\; m+s+\frac{3n}{2}-1
\end{matrix}
\right) \nonumber \\[3. pt]
&\quad \times {}_4F_3 \left( \begin{matrix} \frac{n}{2}, \, s - m + \frac{n}{2} + 1, \, 1 - \frac{n}{2}, \, s + n \\ s + \frac{3n}{2}, \, 2 - m - \frac{n}{2}, \, s + \frac{n}{2} + 1 \end{matrix} ; 1 \right),
      \end{align}
      for $m >0.$\\
      When $n$ is odd we have 
\begin{align} 
I_m(s) &= \frac{(n-1)_m^2}{(n+s)_m}\,
\Gamma\!\left(
\begin{matrix}
s+\frac{n}{2}+1,\; s+2n-1,\; 1-m-\frac{n}{2}
\\
s-m+\frac{n}{2}+1,\; 1-\frac{n}{2},\; m+2n+s-1
\end{matrix}
\right) \label{liche} \\[3. pt]
&\qquad \quad \times {}_4F_3 \left( \begin{matrix} n + s, \, m + n - 1, \, m + s + \frac{3n}{2} - 1, \, m \\ m + n + s, \, m + \frac{n}{2}, \, m + 2n + s - 1 \end{matrix} ; 1 \right) \nonumber \\[1em]
& + (n-1)_m^2  \Gamma\!\left(
\begin{matrix}
\frac{n}{2},\; s+2n-1,\; m+\frac{n}{2}-1,\; s+n
\\
m,\; m+n-1,\; s+\frac{3n}{2},\; m+s+\frac{3n}{2}-1
\end{matrix}
\right) \nonumber \\[3. pt]
&\qquad \quad \times {}_4F_3 \left( \begin{matrix} \frac{n}{2}, \, s - m + \frac{n}{2} + 1, \, 1 - \frac{n}{2}, \, s + n \\ s + \frac{3n}{2}, \, 2 - m - \frac{n}{2}, \, s + \frac{n}{2} + 1 \end{matrix} ; 1 \right) \nonumber,
\end{align}
for $m >0.$
  \end{proposition}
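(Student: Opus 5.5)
Our plan is to write $S_m(t)^2$ as a double hypergeometric series, integrate term by term against $t^{m+\frac n2-1}(1-t)^s$ by the Beta integral, and reduce the resulting Kampé de Fériet sum to ${}_4F_3$ series at $1$.

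\textbf{Step 1: the double series.} Since the upper parameter $1-\frac n2$ is a nonpositive integer for even $n$, ${}_2F_1(m,1-\frac n2;m+\frac n2;t)$ is then a polynomial. For odd $n$ it is an infinite series with a $(1-t)^{1-n/2}$-type singularity at $t=1$. In both cases, for $s>-1$ we expand
\[
S_m(t)^2=\frac{(n-1)_m^2}{(\frac n2)_m^2}\sum_{k,\ell\ge0}\frac{(m)_k(1-\frac n2)_k(m)_\ell(1-\frac n2)_\ell}{(m+\frac n2)_k(m+\frac n2)_\ell\,k!\,\ell!}t^{k+\ell}.
\]
Integrating term by term gives
\[
\int_0^1 t^{m+\frac n2-1+k+\ell}(1-t)^s\,dt=\frac{\Gamma(m+\frac n2+k+\ell)\Gamma(s+1)}{\Gamma(m+n+s+1+k+\ell)}.
\]
Hence $I_m(s)$ is an explicit Gamma prefactor times
\[
F^{1:2;2}_{1:1;1}\left(\begin{matrix} m+\frac n2 : m,1-\frac n2 ; m,1-\frac n2\\ m+n+s+1 : m+\frac n2 ; m+\frac n2\end{matrix};1,1\right).
\]
The $\Gamma(s+1)$ in the Beta integral cancels the $\Gamma(s+1)$ in the normalization of $I_m(s)$. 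Moving to the $F^{0:3;3}_{1:1;1}$ form recalled in Section 2 only costs an extra pair of Pochhammer symbols.

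\textbf{Step 2: collapse to one variable.} We sum the inner series over $\ell$ with $N=k+\ell$ fixed. Equivalently, we sum first over $\ell$ for fixed $k$, obtaining a ${}_3F_2(m,1-\frac n2,m+\frac n2+k;\,m+\frac n2,m+n+s+1+k;1)$. Here $m+\frac n2$ in the numerator cancels against the denominator up to a shift, so the inner sum is a ${}_2F_1$ at $1$ perturbed by one contiguous parameter. We evaluate it by Gauss's formula, or by Pfaff–Saalschütz when $n$ is even, since it then terminates and is balanced. Substituting back leaves a single series in $k$, which after regrouping Gamma factors is a ${}_4F_3$ at $1$. We would try to recognize it as the second ${}_4F_3$ in the statement, with parameters $\frac n2,\,s-m+\frac n2+1,\,1-\frac n2,\,s+n$. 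If the direct route produces a different ${}_4F_3$, we would apply a Whipple/Bailey three-term or two-term transformation for balanced ${}_4F_3$ to reach the stated parameters. This is the approach that naturally produces the two-term structure.

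\textbf{Step 3: the parity split.} For even $n$ everything terminates: $(1-\frac n2)_k=0$ for $k\ge\frac n2$. The transformation used is then a two-term one (Whipple's transformation for terminating balanced ${}_4F_3$), and we obtain \eqref{sude}. For odd $n$ the series does not terminate, and the corresponding relation is Bailey's three-term nonterminating transformation. This produces the second, ``complementary'' term, with parameters shifted by $m$: $n+s,\,m+n-1,\,m+s+\frac{3n}{2}-1,\,m$ over $m+n+s,\,m+\frac n2,\,m+2n+s-1$. Its Gamma prefactor contains $\Gamma(1-m-\frac n2)/\Gamma(1-\frac n2)$, which vanishes in the limit for even $n$. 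This is consistent with \eqref{sude} being the specialization of \eqref{liche}, possibly checked via \eqref{TheIdentity}.

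\textbf{Step 4: continuation and the main obstacle.} Both sides are meromorphic in $s$, and the ${}_4F_3$ series converge absolutely at $1$ (check the parameter excess, which is $1$ in the balanced cases). Identity for $s>-1$ therefore extends by analytic continuation, and this gives the continuation. The main obstacle is Step 2/3 for odd $n$: identifying exactly which three-term ${}_4F_3$ relation converts the Kampé de Fériet sum into the stated pair with the stated Gamma prefactors. Convergence conditions at $x=y=1$ must be respected there, and may require first taking $s$ large and then continuing. We would first attempt the reduction with $1-\frac n2$ treated as a generic parameter $a$: derive a single generic formula via Bailey's transformation, then specialize $a=1-\frac n2$, noting that the complementary term dies when $a$ is a nonpositive integer.
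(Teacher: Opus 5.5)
\textbf{There is a genuine gap in Steps 2--3.} Step 1 is what the paper does. (The Beta integral gives $\Gamma(m+\frac n2+s+1+k+\ell)$ in the denominator, not $\Gamma(m+n+s+1+k+\ell)$.) The collapse in Step 2, however, fails. The inner series ${}_3F_2(m,1-\frac n2,m+\frac n2+k;\,m+\frac n2,\,m+\frac n2+s+1+k;1)$ has parametric excess $n+s$. It is therefore not balanced, and Pfaff--Saalsch\"utz is unavailable. Moreover, $(m+\frac n2+k)_\ell/(m+\frac n2)_\ell$ is a polynomial of degree $k$ in $\ell$, not a single contiguous shift, so Gauss only rewrites the inner sum as a $(k+1)$-term sum. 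You are left with a genuine double series, and no single ${}_4F_3$ in $k$ emerges. For the same reason, the factor $(m+\frac n2)_{k+\ell}$ prevents passing to an $F^{0:3;3}_{1:1;1}$ merely by inserting Pochhammer symbols.

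For even $n$ the idea can be rescued. Thomae's relation turns the inner series into the terminating ${}_3F_2(n+s,-k,s+1;\,m+n+s,\,s+\frac n2+1;1)$. The outer sum is finite, since $(1-\frac n2)_k=0$ for $k\ge\frac n2$, so you may interchange and sum ${}_2F_1(m+l,1-\frac n2+l;\,m+\frac n2+l;1)$ by Gauss. This yields the balanced terminating ${}_4F_3(m,1-\frac n2,n+s,s+1;\,m+n+s,\,\frac n2+s+1,\,2-n;1)$ of Proposition~\ref{Prop 3.2}. The two-term transformation \eqref{Bailey} then gives \eqref{sude}, as in the remark following that proposition.

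For odd $n$ your plan has no object to work with. The Gauss sums above have excess $n-1-l$ and diverge for $l\ge n-1$; formally, the lower parameter $2-n$ appears. So there is no single nonterminating ${}_4F_3$ to which a three-term Bailey relation could be applied. You flag this yourself as the main obstacle, and it is exactly the missing idea. The paper keeps the double series and reshapes it, using Thomae, Sheppard's transformation, the reindexing $j=k+r$, and a second Thomae, into $F^{0:3;3}_{1:1;1}\bigl(-:n+s,m+n-1,1-\frac n2;\,\frac n2,s-m+\frac n2+1,s+2n-1;\;s+\frac{3n}{2}:m+n+s;\,s+\frac{3n}{2};1,1\bigr)$. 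Its parameters satisfy $a'=d-a$ and $b'=d-b$. For precisely this pattern, the reduction formula of \cite{pitre1996} splits the double sum into the two ${}_4F_3$ terms with the stated Gamma factors.

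Finally, the even case is not obtained by letting $\Gamma(1-m-\frac n2)/\Gamma(1-\frac n2)$ vanish. This ratio equals $1/(1-m-\frac n2)_m$, which is finite and nonzero at even $n$. The paper instead perturbs $m\to m+\epsilon$. Then $1/\Gamma(1-\frac n2)=0$ annihilates the first term identically, while the second ${}_4F_3$ is read as terminating at $1-\frac n2$ despite its lower parameter $2-m-\frac n2$. Your ``generic parameter, then specialize'' shortcut therefore does not by itself produce the even formula.
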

  \begin{proof}
Recall that
\begin{equation*}
I_m(s) := \frac{\Gamma(\frac{n}{2} + s + 1)}{\Gamma(\frac{n}{2})\Gamma(s + 1)} \int_{0}^{1} t^{m + \frac{n}{2} - 1} (1 - t)^{s} \, S_m(t)^{2} \, dt,
\end{equation*}
where
\begin{equation*}
S_m(t) := \frac{(n-1)_m}{(\frac{n}{2})_m} \sum_{k=0}^{\infty} \frac{(m)_k (1 - \frac{n}{2})_k}{(m + \frac{n}{2})_k k!} t^k.
\end{equation*}
For $m=0$, we have $I_m(s)=1$ for all $s\in\mathbb{C}$.
Hence, from now on, we assume that $m>0$; moreover, for the absolute convergence of various sums, we may assume that $s > -1$.\\
Expanding each $S_m(t)$ into sum, changing order of summation and integration and using
$$
\int_{0}^{1} t^{\beta}(1-t)^{\alpha}\,dt
= \frac{\Gamma(\beta+1)\,\Gamma(\alpha+1)}{\Gamma(\alpha+\beta+2)},
\quad \text{for $\alpha>-1, \; \beta>-1$},
$$
to integrate term by term we obtain:
\begin{align}\notag
I_m(s)
&=\frac{\Gamma\!\left(\frac n2+s+1\right)}{\Gamma\!\left(\frac n2\right)\Gamma(s+1)}
\left(\frac{(n-1)_m}{\left(\frac n2\right)_m}\right)^2
\sum_{k=0}^{\infty}\sum_{j=0}^{\infty}
\frac{(m)_k\left(1-\frac n2\right)_k}{\left(m+\frac n2\right)_k\,k!}
\frac{(m)_j\left(1-\frac n2\right)_j}{\left(m+\frac n2\right)_j\,j!}\notag \\
&\quad\times
\int_0^1
t^{m+\frac n2-1+k+j}(1-t)^s\,dt\notag
\\
&=\frac{\Gamma\!\left(\frac n2+s+1\right)}{\Gamma\!\left(\frac n2\right)\Gamma(s+1)}
\left(\frac{(n-1)_m}{\left(\frac n2\right)_m}\right)^2
\sum_{k=0}^{\infty}\sum_{j=0}^{\infty}
\frac{(m)_k\left(1-\frac n2\right)_k}{\left(m+\frac n2\right)_k\,k!}
\frac{(m)_j\left(1-\frac n2\right)_j}{\left(m+\frac n2\right)_j\,j!}\label{sumapom} \\
&\quad\times
\frac{\Gamma\!\left(m+\frac n2+k+j\right)\Gamma(s+1)}
{\Gamma\!\left(m+\frac n2+k+j+s+1\right)}\notag.
\end{align}
After cancelling the factor $\Gamma(s+1)$ and rewriting
$\Gamma\left(m+\frac n2+k+j\right)$ and $\Gamma\left(m+\frac n2+k+j+s+1\right)$ as
$\Gamma\left(m+\frac n2+j\right)\left(m+\frac n2+j\right)_k$ and
$\Gamma\left(m+\frac n2+s+1+j\right)\left(m+\frac n2+s+1+j\right)_k$,
the sum over $k$ in \eqref{sumapom} can be expressed as a hypergeometric series ${}_3F_2$:\\
\begin{align}\notag
I_m(s) 
&= \frac{\Gamma(\frac{n}{2} + s + 1)}{\Gamma(\frac{n}{2})} \left( \frac{(n-1)_m}{(\frac{n}{2})_m} \right)^2 \sum_{j=0}^{\infty} \frac{(m)_j (1 - \frac{n}{2})_j}{(m + \frac{n}{2})_j j!} \left( \frac{\Gamma(m + \frac{n}{2} + j)}{\Gamma(m + \frac{n}{2} + s + 1 + j)} \right)  \\\label{suma3F2}
&\times {}_3F_2\left(
\begin{matrix}
m,\; 1 - \frac{n}{2},\; m+\frac{n}{2}+j\\
m + \frac{n}{2},\; m+\frac{n}{2}+s+1+j
\end{matrix}
;\; 1
\right).
\end{align}
Now, by applying Thomae's identity (\cite[p. 14, eq. 2]{bailey1935}):
\begin{equation}\label{Thomae}
\frac{\Gamma(a)}{\Gamma(e)\Gamma(f)}\,
{}_3F_2\!\left(\begin{matrix}
a,\; b,\; c\\
e,\; f
\end{matrix};\,1\right)
=
\frac{\Gamma(\sigma)}{\Gamma(\sigma+b)\Gamma(\sigma+c)}\,
{}_3F_2\!\left(\begin{matrix}
\sigma,\; e-a,\; f-a\\
\sigma+b,\; \sigma+c
\end{matrix};\,1\right),
\end{equation}
with parameters
\begin{align*}
    a &= m, \quad b = 1 - \frac{n}{2}, \quad c = m + \frac{n}{2} + j, \\
    e &= m + \frac{n}{2}, \quad f = m + \frac{n}{2} + s + j + 1,
\end{align*}
where $\sigma$ denotes the \textit{excess parameter} $\sigma := e+f-a-b-c = n+s$, we see that
\begin{align}\label{3F2}
&{}_3F_2\!\left(
\begin{matrix}
m,\; 1-\frac n2,\; m+\frac n2+j\\[2pt]
m+\frac n2,\; m+\frac n2+s+1+j
\end{matrix};1\right)
=
\frac{
\Gamma\!\left(m+\frac n2\right)\,
\Gamma\!\left(m+\frac n2+s+1+j\right)\,
\Gamma(n+s)
}{
\Gamma\!\left(m+\frac n2+j\right)\,
\Gamma(m+n+s)\,
\Gamma\!\left(s+\frac n2+1\right)
}\\\notag
&
\;\times
{}_3F_2\!\left(
\begin{matrix}
n+s,\; -j,\; s+1\\[2pt]
m+n+s,\; s+\frac n2+1
\end{matrix};1\right).
\end{align}
We then apply Sheppard’s transformation:
\begin{equation*}\label{Transform}
{}_3F_2 \!\left(
\begin{matrix}
-\ell,\ \alpha_1,\ \alpha_2 \\
\beta_1,\ \beta_2
\end{matrix}
; 1 \right)
=
\frac{(\beta_2 - \alpha_1)_\ell}{(\beta_2)_\ell}
\,
{}_3F_2 \!\left(
\begin{matrix}
-\ell,\ \alpha_1,\ \beta_1 - \alpha_2 \\
\beta_1,\ 1 - \beta_2 + \alpha_1 - \ell
\end{matrix}
; 1 \right),
\end{equation*}
for terminating $_3F_2$ (see \cite[Appendix, formula (I)]{andrews1999}).
Thus, \eqref{3F2} is equal to
\begin{align}\label{3F2konec}
&\frac{
\Gamma\!\left(m+\frac n2\right)\,
\Gamma\!\left(m+\frac n2+s+1+j\right)\,
\Gamma(n+s)
}{
\Gamma\!\left(m+\frac n2+j\right)\,
\Gamma(m+n+s)\,
\Gamma\!\left(s+\frac n2+1\right)
}
\,\frac{\left(1-\frac n2\right)_j}{\left(s+\frac n2+1\right)_j} \\\notag
&\qquad\times
{}_3F_2\!\left(
\begin{matrix}
-j,\; n+s,\; m+n-1 \\[2pt]
m+n+s,\; \frac n2-j
\end{matrix}
;1\right).
\end{align}
Substituting \eqref{3F2konec} into \eqref{suma3F2} and cancelling Gamma functions we obtain:
\begin{align}\notag
I_m(s)
&= \frac{1}{\Gamma\!\left(\tfrac{n}{2}\right)}
\left( \frac{(n-1)_m}{\left(\tfrac{n}{2}\right)_m} \right)^2
\frac{\Gamma\!\left(m + \tfrac{n}{2}\right)\Gamma(n+s)}
{\Gamma(m+n+s)}
\\\label{sumapomm}
&\quad \times
\sum_{j=0}^{\infty}
\frac{(m)_j\left(1-\tfrac{n}{2}\right)_j}
{\left(m+\tfrac{n}{2}\right)_j\, j!}
\frac{\left(1-\tfrac{n}{2}\right)_j}
{\left(s+\tfrac{n}{2}+1\right)_j}
\;
{}_3F_2\!\left(
\begin{matrix}
-j,\ n+s,\ m+n-1 \\[2pt]
m+n+s,\ \tfrac{n}{2}-j
\end{matrix}
; 1 \right).
\end{align}
Expanding $_3F_2$ and rearranging the terms, we obtain
\begin{align*}
I_m(s)&=\frac{1}{\Gamma\!\left(\tfrac{n}{2}\right)}
\left( \frac{(n-1)_m}{\left(\tfrac{n}{2}\right)_m} \right)^2
\frac{\Gamma\!\left(m + \tfrac{n}{2}\right)\Gamma(n+s)}
{\Gamma(m+n+s)}\\
&\times\sum_{j=0}^{\infty}
\sum_{r=0}^{j}
\frac{(m)_j\left(1-\tfrac{n}{2}\right)_j^2}{\left(m+\tfrac{n}{2}\right)_j
\left(s+\tfrac{n}{2}+1\right)_j\, j!}
\,
\frac{(-j)_r\,(n+s)_r\,(m+n-1)_r}
{(m+n+s)_r\,\left(\tfrac{n}{2}-j\right)_r\, r!}.
\end{align*}
Making the substitution $j = k+r$, changing the order of summation, and using the identities:
\begin{equation*}
(-j)_k = (-1)^k\frac{j!}{(j-k)!},
\qquad
\left(\tfrac{n}{2}-j\right)_k
=  (-1)^k (1-\tfrac{n}{2}+r)_k=\frac{(-1)^k\left(1-\tfrac{n}{2}\right)_{r+k}}
{\left(1-\tfrac{n}{2}\right)_r},
\end{equation*}
we obtain 
\begin{align}\notag
I_m(s)&=\frac{1}{\Gamma\!\left(\tfrac{n}{2}\right)}
\left( \frac{(n-1)_m}{\left(\tfrac{n}{2}\right)_m} \right)^2
\frac{\Gamma\!\left(m + \tfrac{n}{2}\right)\Gamma(n+s)}
{\Gamma(m+n+s)}\\\label{Poslední3F2}
&\quad\times\sum_{k=0}^{\infty}\sum_{r=0}^{\infty}
\frac{(m)_{k+r}\,\left(1-\tfrac n2\right)_{k+r}\,\left(1-\tfrac n2\right)_r}
{\left(m+\tfrac n2\right)_{k+r}\,\left(s+\tfrac n2+1\right)_{k+r}}
\;
\frac{(n+s)_k\,(m+n-1)_k}{(m+n+s)_k\,k!}
\;
\frac{1}{r!}\\\notag
&=
\frac{1}{\Gamma\!\left(\tfrac{n}{2}\right)}
\left( \frac{(n-1)_m}{\left(\tfrac{n}{2}\right)_m} \right)^2
\frac{\Gamma\!\left(m + \tfrac{n}{2}\right)\Gamma(n+s)}
{\Gamma(m+n+s)}
\sum_{k=0}^{\infty}
\frac{(n+s)_k\,(m+n-1)_k}{(m+n+s)_k\,k!}
\frac{(m)_k\left(1-\tfrac n2\right)_k}{\left(m+\tfrac n2\right)_k\left(s+\tfrac n2+1\right)_k}\\\notag
&\quad\times{}_3F_2\!\left(
\begin{matrix}
m+k,\ 1-\tfrac n2+k,\ 1-\tfrac n2\\
m+\tfrac n2+k,\ s+\tfrac n2+1+k
\end{matrix}
;1\right).
\end{align}
Using the estimate
\begin{equation} \label{odhad}
\frac{\Gamma(\alpha + m)}{\Gamma(\beta + m)} = m^{\alpha - \beta}\left(1+\mathcal{O}\left(\frac{1}{m}\right)\right)
\end{equation}
(see~\cite{erdelyi1}, Eq.~(4), p.~47), we see that \eqref{Poslední3F2} is bounded, up to a multiplicative constant, by
\begin{align*}
\sum_{k,r=1}^{\infty}
(k+r)^{-s-\frac{3n}{2}} r^{-\frac n2} k^{n-2}
&=
\sum_{\substack{k,r\ge 1\\ r\le k}}
(k+r)^{-s-\frac{3n}{2}} r^{-\frac n2} k^{n-2}
+
\sum_{\substack{k,r\ge 1\\ k<r}}
(k+r)^{-s-\frac{3n}{2}} r^{-\frac n2} k^{n-2}\\
&<
\sum_{\substack{k,r\ge 1\\ r\le k}}
k^{-s-\frac{3n}{2}} r^{-\frac n2} k^{n-2}
+
\sum_{\substack{k,r\ge 1\\ k<r}}
r^{-s-\frac{3n}{2}} r^{-\frac n2} k^{n-2}\\
&=
\sum_{k=1}^{\infty}
k^{-s-\frac n2-2}
\sum_{r=1}^{k} r^{-\frac n2}
+
\sum_{r=1}^{\infty}
r^{-s-2n}
\sum_{k=1}^{r-1} k^{n-2}
\nonumber \\
&=
C\left(\sum_{k=1}^{\infty}
k^{-s-\frac n2-2}
+
\sum_{r=1}^{\infty}
r^{-s-n-1}\right),
\end{align*}
which converges absolutely for $s > -1 - \frac{n}{2}$. Hence, changing the order of summation in \eqref{Poslední3F2} is justified.\\
Applying one more Thomae's transform \eqref{Thomae} with parameters:
\begin{align*}
    & a=m+k,\;\; \quad b=k-\tfrac n2,\;\; \quad c=1-\tfrac n2,\\
    & e=m+\tfrac n2+k,\;\; \quad f=s+\tfrac n2+k+1,
\end{align*}
we obtain 
\begin{align*}
{}_3F_2\!\left(
\begin{matrix}
m+k+1,\ k+1-\tfrac n2,\ 1-\tfrac n2\\[4 pt]
m+\tfrac n2+k+1,\ s+\tfrac n2+k+2
\end{matrix}
;\,1\right)
&=
\frac{
\Gamma\!\left(m+\tfrac n2+k+1\right)\,
\Gamma\!\left(s+\tfrac n2+k+2\right)\,
\Gamma(s+2n-1)
}{
\Gamma(m+k+1)\,
\Gamma\!\left(s+\tfrac{3n}{2}+k\right)\,
\Gamma\!\left(s+\tfrac{3n}{2}\right)
}
\\
&\qquad\times
{}_3F_2\!\left(
\begin{matrix}
\tfrac n2,\ s-m+\tfrac n2+1,\ s+2n-1\\[4 pt]
s+\tfrac{3n}{2}+k,\ s+\tfrac{3n}{2}
\end{matrix}
;\,1\right).
\end{align*}
After substituting this into the sum and simplifying the Pochhammer symbols, we obtain:
\begin{align*}
I_m(s)
&= \frac{\Gamma(\frac{n}{2}) (n-1)_m^2 \Gamma(n+s) \Gamma(s+\frac{n}{2}+1) \Gamma(s+2n-1)}
{\Gamma(m) \Gamma(m+n+s) \Gamma(s+\frac{3n}{2})^2}
\notag \\[6pt]
&\quad \times
\sum_{k=0}^{\infty}
\sum_{j=0}^{\infty}
\frac{(n+s)_k (m+n-1)_k \left(1-\frac{n}{2}\right)_k}{(m+n+s)_k\,k!}
\;
\frac{
\left(\frac{n}{2}\right)_j
\left(s-m+\frac{n}{2}+1\right)_j
\left(s+2n-1\right)_j
}{
\left(s+\frac{3n}{2}\right)_{k+j}
\left(s+\frac{3n}{2}\right)_j\; j!
}.
\end{align*}
The double sum can be identified as a special case of the Kampé de Fériet function:
\begin{align}
I_m(s)
&= \frac{\Gamma(\frac{n}{2}) (n-1)_m^2 \Gamma(n+s)
\Gamma(s+\frac{n}{2}+1) \Gamma(s+2n-1)}
{\Gamma(m) \Gamma(m+n+s) \Gamma(s+\frac{3n}{2})^2}
\label{Kampee}\\
&\quad \times
F^{0:3;3}_{1:1;1}\!\left(
\begin{matrix}
-:n+s,\;m+n-1,\;1-\frac{n}{2};
\,\frac{n}{2},\;s-m+\frac{n}{2}+1,\;s+2n-1\\[3pt]
s+\frac{3n}{2}: m+n+s\, ; \,s+\frac{3n}{2}
\end{matrix}
; 1,1\right).
\notag
\end{align}
Note that \eqref{Kampee} implies that
$$
\bigl(\Gamma(n+s)\Gamma(s+\tfrac n2+1)\Gamma(s+2n-1)\bigr)^{-1} I_m(s)
$$
is an analytic function on the entire complex plane.\\
Since \(F^{0:3;3}_{1:1;1}\) depends continuously on the parameter \(m\), we may introduce a small perturbation \(m+\epsilon\). We can then apply the following reduction formula (see~\cite{pitre1996}, Eq.~(6)):
\begin{align}\label{redukce}
    & F^{0:3;3}_{1:1;1} \! \left(
  \begin{array}{c}
    - : a,\, b,\, c \, ; \, d - a,\, d-b \, c' \\
    d\, : e \, ; \, e'
  \end{array}
  ; 1, 1 \right)\\\nonumber
    & = \Gamma\!\left(
\begin{matrix}
d,\; e',\; a'+b'-d,\; d+e'-a'-b'-c'
\\[3pt]
a',\; b',\; e'-c',\; d+e'-a'-b'
\end{matrix}
\right) \\\notag
&\quad \times {}_4F_3 \left( \begin{matrix} a, b, e-c, d+e'-a'-b'-c' \\[3pt] e, 1+d-a'-b', d+e'-a'-b' \end{matrix} ; 1 \right) \\\nonumber
&+ \Gamma\!\left(
\begin{matrix}
d,\; e,\; a+b-d,\; d+e-a-b-c
\\[3pt]
a,\; b,\; e-c,\; d+e-a-b
\end{matrix}
\right) \\\nonumber
&\quad \times {}_4F_3 \left( \begin{matrix} a', b', e'-c', d+e-a-b-c \\[3pt] e', 1+d-a-b, d+e-a-b \end{matrix} ; 1 \right),
\end{align}
with $a = n + s$ and $b = (m +\epsilon) +n-1$ and we obtain
\begin{align*}
F^{0:3;3}_{1:1;1} \! &\left(
\begin{array}{c}
  - : n+s,\, m+\epsilon+n-1,\, 1-\frac{n}{2} ; \, \frac{n}{2},\, s-m-\epsilon+\frac{n}{2}+1,\, s+2n-1 \\[3pt]
  s+\frac{3n}{2}\, : m+\epsilon+n+s \, ; \, s+\frac{3n}{2}
\end{array}
; 1, 1 \right) \\
&= \frac{\Gamma(s+\frac{3n}{2})^2 \Gamma(1-m-\epsilon-\frac{n}{2}) \Gamma(m+\epsilon)}{\Gamma(\frac{n}{2}) \Gamma(s-m-\epsilon+\frac{n}{2}+1) \Gamma(1-\frac{n}{2}) \Gamma(s+m+\epsilon+2n-1)} \\
&\quad \times {}_4F_3 \left( \begin{matrix} n+s, \, m+\epsilon+n-1, \, m+\epsilon+s+\frac{3n}{2}-1, \, m+\epsilon \\[3pt] m+\epsilon+n+s, \, m+\epsilon+\frac{n}{2}, \, s+m+\epsilon+2n-1 \end{matrix} ; 1 \right) \\
&+ \frac{\Gamma(s+\frac{3n}{2}) \Gamma(m+\epsilon+n+s) \Gamma(m+\epsilon+\frac{n}{2}-1) \Gamma(s+n)}{\Gamma(n+s) \Gamma(m+\epsilon+n-1) \Gamma(m+\epsilon+s+\frac{3n}{2}-1) \Gamma(s+\frac{n}{2}+1)} \\
&\quad \times {}_4F_3 \left( \begin{matrix} \frac{n}{2}, \, s-m-\epsilon+\frac{n}{2}+1, \, 1-\frac{n}{2}, \, s+n \\[3pt] s+\frac{3n}{2}, \, 2-m-\epsilon-\frac{n}{2}, \, s+\frac{n}{2}+1 \end{matrix} ; 1 \right).
\end{align*}
When \(n\) is even, the first term vanishes due to the factor
$\frac{1}{\Gamma(1-\frac{n}{2})}.$ Therefore, letting \(\epsilon \to 0\)
and multiplying by the prefactor
\begin{equation*}
    \frac{\Gamma(\frac{n}{2}) (n-1)_m^2 \Gamma(n+s) \Gamma(s+\frac{n}{2}+1) \Gamma(s+2n-1)}
{\Gamma(m) \Gamma(m+n+s) \Gamma(s+\frac{3n}{2})^2},
\end{equation*}
Our conclusion follows for \(s>-1\). For \(s\leq -1\), the result follows by analytic continuation except at the possible poles of \(I_m(s)\), which occur when
$
s+\frac n2-1=1,2,3,\dots
$
for even \(n\), and when
$
-s-\frac n2+1=1,2,3,\dots$ or $
-s-n=1,2,3,\dots
$
for odd \(n\).
\end{proof}
The following proposition gives another formula for $I_m(s)$ for even $n$.
\begin{proposition} \label{Prop 3.2}
    When $n$ is even we have
    \begin{equation}\label{sude_dva_dva}
        I_m(s) = \frac{(n-1)_m}{(n+s)_m} \; {}_4F_3 \left( \begin{matrix} m,\; 1 - \frac{n}{2},\; n+s,\; s+1 \\[4pt] m + n + s,\; \frac{n}{2} + s + 1,\; 2-n \end{matrix} \;;\; 1 \right).
    \end{equation}
\end{proposition}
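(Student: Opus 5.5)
The plan is to return to the intermediate representation obtained in the proof of Proposition~\ref{Pro 3.1} just before Sheppard's transformation. The key point is that for even $n$ the parameter $1-\frac n2$ is a nonpositive integer, so every sum that appears is finite. This removes all convergence issues, and a single Chu--Vandermonde summation should give the result.

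\emph{Step 1: reduce to a finite double sum.} Substitute the Thomae step \eqref{3F2} into \eqref{suma3F2}, but do not apply Sheppard's transformation. The Gamma factors in $j$ cancel, and one should obtain, for $s>-1$,
\begin{equation*}
I_m(s)=\frac{(n-1)_m^2}{(\frac n2)_m\,(n+s)_m}\sum_{j=0}^{\frac n2-1}\frac{(m)_j(1-\frac n2)_j}{(m+\frac n2)_j\,j!}\;{}_3F_2\!\left(\begin{matrix}-j,\ n+s,\ s+1\\ m+n+s,\ s+\frac n2+1\end{matrix};1\right).
\end{equation*}
Here the identity $\Gamma(n+s)/\Gamma(m+n+s)=1/(n+s)_m$ has been used. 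The sum stops at $j=\frac n2-1$ because $(1-\frac n2)_j=0$ for $j\ge \frac n2$.

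\emph{Step 2: interchange the sums and apply Chu--Vandermonde.} Expand the terminating ${}_3F_2$ as a sum over $r\le j$ and swap the order of the (finite) sums. Then put $j=r+k$ and use $(-j)_r/j!=(-1)^r/(j-r)!$. The inner sum over $k$ becomes
\begin{equation*}
(-1)^r\frac{(m)_r(1-\frac n2)_r}{(m+\frac n2)_r}\,{}_2F_1\!\left(\begin{matrix}m+r,\ 1-\frac n2+r\\ m+\frac n2+r\end{matrix};1\right),
\end{equation*}
which terminates. By Gauss/Chu--Vandermonde, the ${}_2F_1$ equals
\begin{equation*}
\frac{\Gamma(m+\frac n2+r)\,\Gamma(n-1-r)}{\Gamma(\frac n2)\,\Gamma(m+n-1)}.
\end{equation*}
This is legitimate since $r\le \frac n2-1<n-1$, so $\Gamma(n-1-r)$ is finite.

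Next use $\Gamma(n-1-r)=(-1)^r\Gamma(n-1)/(2-n)_r$. The signs cancel, and the $r$-th term collapses to
\begin{equation*}
\frac{(\frac n2)_m}{(n-1)_m}\cdot\frac{(m)_r(1-\frac n2)_r(n+s)_r(s+1)_r}{(m+n+s)_r(s+\frac n2+1)_r(2-n)_r\,r!}.
\end{equation*}
Multiplying by the prefactor $(n-1)_m^2/((\frac n2)_m(n+s)_m)$ gives exactly \eqref{sude_dva_dva} for $s>-1$.

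\emph{Step 3: extend to all $s$.} Both sides are meromorphic in $s$: the ${}_4F_3$ is a finite sum of rational functions of $s$, because it terminates via $1-\frac n2$. The identity therefore extends by analytic continuation to all $s$ away from poles.

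The main obstacle is bookkeeping rather than analysis. Two points need checking. First, the denominator parameter $2-n$ must never produce a zero before the numerator $1-\frac n2$ terminates the series; this holds since $\frac n2-1<n-2$ for $n>2$, consistent with the finite-sum convention. Second, the sign and Gamma identities in Step 2 must match exactly, the critical one being the conversion of $\Gamma(n-1-r)$ into $(2-n)_r$.
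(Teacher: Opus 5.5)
Your proposal is correct and follows the paper's own proof. The steps match: the Thomae step \eqref{3F2} yielding the terminating ${}_3F_2(-j,\,n+s,\,s+1;\,m+n+s,\,s+\frac n2+1;1)$, interchange of the sums (finite because $n$ is even), reindexing $j=r+k$, and Gauss/Chu--Vandermonde evaluation of the inner ${}_2F_1$. Then come the conversion $\Gamma(n-1-r)=(-1)^r\Gamma(n-1)/(2-n)_r$ and analytic continuation from $s>-1$; simplifying the prefactor early to $(n-1)_m^2/\bigl((\frac n2)_m(n+s)_m\bigr)$ is only tidier bookkeeping of the same computation.
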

\begin{proof}
From the equation~\eqref{sumapom} we know that for $s > -1$
\begin{align}\label{2.2-1}
    I_m(s) &= 
      \frac{\Gamma\left(\frac{n}{2} + s + 1\right)}{\Gamma\left(\frac{n}{2}\right)} \left( \frac{(n-1)_m}{\left(\frac{n}{2}\right)_m} \right)^2 \sum_{j=0}^{\infty} \frac{(m)_j \left(1 - \frac{n}{2}\right)_j}{\left(m + \frac{n}{2}\right)_j j!} \left( \frac{\Gamma\left(m + \frac{n}{2} + j\right)}{\Gamma\left(m + \frac{n}{2} + s + 1 + j\right)} \right)\nonumber \\
    &\quad \times {}_3F_2 \left( \begin{matrix} m,\; 1 - \frac{n}{2},\; m+\frac{n}{2}+j \\[4pt] m + \frac{n}{2},\; m+\frac{n}{2}+s+1+j \end{matrix} \;;\; 1 \right).
\end{align}
Again, we may apply Thomae's identity \eqref{Thomae}
with parameters:
\begin{align*}
    & a=m+k,\;\; \quad b=k-\tfrac n2,\;\; \quad c=1-\tfrac n2,\\
    & e=m+\tfrac n2+k,\;\; \quad f=s+\tfrac n2+k+1,
\end{align*}
and see that
\begin{align}\label{2.2-2}
    &{}_3F_2 \left( \begin{matrix} m,\; 1-\frac{n}{2},\; m+\frac{n}{2}+j \\[4pt] m+\frac{n}{2},\; m+\frac{n}{2}+s+1+j \end{matrix} \;;\; 1 \right) \nonumber \\
    &\quad = \frac{\Gamma\left(m+\frac{n}{2}\right) \Gamma\left(m+\frac{n}{2}+s+1+j\right) \Gamma(n+s)}{\Gamma\left(m+\frac{n}{2}+j\right) \Gamma(m+n+s) \Gamma\left(s+\frac{n}{2}+1\right)} \nonumber \\
    &\quad\quad \times {}_3F_2 \left( \begin{matrix} n+s,\; -j,\; s+1 \\[4pt] m+n+s,\; s+\frac{n}{2}+1 \end{matrix} \;;\; 1 \right).
\end{align}
Substituting \eqref{2.2-2} into \eqref{2.2-1} we obtain
\begin{align}
    I_m(s) &= \frac{1}{\Gamma\left(\frac{n}{2}\right)} \left( \frac{(n-1)_m}{\left(\frac{n}{2}\right)_m} \right)^2 \frac{\Gamma\left(m + \frac{n}{2}\right) \Gamma(n + s)}{\Gamma(m + n + s)} \nonumber \\
    &\quad \times \sum_{j=0}^{\infty} \frac{(m)_j \left(1 - \frac{n}{2}\right)_j}{\left(m + \frac{n}{2}\right)_j j!} \sum_{l=0}^{j} \frac{(n+s)_l (-j)_l (s+1)_l}{(m+n+s)_l \left(\frac{n}{2} + s + 1\right)_l l!}.
\end{align}
Changing the order of summation, which is justified as the sum over $j$ is finite when $n$ is even, we obtain:
\begin{align} \label{2.2-3}
    I_m(s) &= \frac{1}{\Gamma\left(\frac{n}{2}\right)} \left( \frac{(n-1)_m}{\left(\frac{n}{2}\right)_m} \right)^2 \frac{\Gamma\left(m + \frac{n}{2}\right) \Gamma(n + s)}{\Gamma(m + n + s)} \nonumber \\
    &\quad \times \sum_{l=0}^{\infty} \frac{(n+s)_l (s+1)_l}{(m+n+s)_l \left(\frac{n}{2} + s + 1\right)_l l!} \sum_{j=l}^{\infty} \frac{(m)_j \left(1 - \frac{n}{2}\right)_j}{\left(m + \frac{n}{2}\right)_j j!} (-j)_l.
\end{align}
We introduce $k = j - l$ and subsequently $(-j)_l = (-1)^l \frac{j!}{(j-l)!} = (-1)^l \frac{(l+k)!}{k!}$. 
Using Gauss' summation formula
\begin{equation*}
    {}_2F_1 \left( \begin{matrix} a,\; b \\ c \end{matrix} \;;\; 1 \right) = \frac{\Gamma(c)\Gamma(c - a - b)}{\Gamma(c - a)\Gamma(c - b)}
\end{equation*}
the inner sum in \eqref{2.2-3} can be written as 
\begin{align}\label{pro 2.2 - 4}
    &(-1)^l \frac{(m)_l \left(1 - \frac{n}{2}\right)_l}{\left(m + \frac{n}{2}\right)_l} \sum_{k=0}^{\infty} \frac{(m+l)_k \left(1 - \frac{n}{2} + l\right)_k}{\left(m + \frac{n}{2} + l\right)_k k!} \nonumber \\
    &= (-1)^l \frac{(m)_l \left(1 - \frac{n}{2}\right)_l}{\left(m + \frac{n}{2}\right)_l} \; {}_2F_1 \left( \begin{matrix} m+l,\; 1 - \frac{n}{2} + l \\[4pt] m + \frac{n}{2} + l \end{matrix} \;;\; 1 \right) \nonumber \\
    &= (-1)^l \frac{(m)_l \left(1 - \frac{n}{2}\right)_l}{\left(m + \frac{n}{2}\right)_l} \frac{\Gamma\left(m + \frac{n}{2} + l\right) \Gamma(n - 1 - l)}{\Gamma\left(\frac{n}{2}\right) \Gamma(m + n - 1)}, \nonumber \\
\end{align}
where the term $\Gamma(n-1-l)$ is finite as $l$ runs from $0$ to $1-\frac{n}{2}$.
Using $\Gamma(m + \frac{n}{2} + l) = \Gamma(m+\frac{n}{2})(m + \frac{n}{2})_l$, $\Gamma(n - 1 - l) = \frac{(-1)^l \Gamma(n - 1)}{(2 - n)_l}$, \eqref{pro 2.2 - 4} is equal to
\begin{equation}\label{pro 2.2 - 5}
    \frac{(m)_l \left(1 - \frac{n}{2}\right)_l}{(2-n)_l} \frac{\Gamma\left(m + \frac{n}{2} \right) \Gamma(n - 1)}{\Gamma\left(\frac{n}{2}\right) \Gamma(m + n - 1)}.
\end{equation}
Substituting this into \eqref{2.2-3} and simplifying the Gamma functions and Pochhammer symbols, we obtain
\begin{align}
    I_m(s) &= \frac{(n-1)_m^2}{\left(\frac{n}{2}\right)_m^2}\frac{\Gamma\left(m+\frac{n}{2}\right) \Gamma(n+s)}{\Gamma(m+n+s)} \frac{\Gamma(n-1)\Gamma\left(m+\frac{n}{2}\right)}{\Gamma\left(\frac{n}{2}\right)^2\,\Gamma(m+n-1)}\nonumber \\
    &\quad \quad \times \sum_{l=0}^{\infty} \frac{(m)_l \left(1 - \frac{n}{2}\right)_l (n+s)_l (s+1)_l}{(m + n + s)_l \left(\frac{n}{2} + s + 1\right)_l (2 - n)_l} \frac{1}{l!} \nonumber \\
    &= \frac{(n-1)_m}{(n+s)_m} \; {}_4F_3 \left( \begin{matrix} m,\; 1 - \frac{n}{2},\; n+s,\; s+1 \\[4pt] m + n + s,\; \frac{n}{2} + s + 1,\; 2-n \end{matrix} \;;\; 1 \right),
\end{align}
Thus, our conclusion follows for \(s>-1\).
For \(s<-1\), the result follows by analytic continuation away from the poles of \(I_m(s)\).
\end{proof}

\begin{remark}
    When $n$ is even we may show that Proposition~\ref{Pro 3.1} and Proposition~\ref{Prop 3.2} indeed give the same expression for $I_m(s).$ To do this we invoke the following transformation for balanced terminating $_4F_3$ due to Bailey:
\begin{equation}\label{Bailey}
    {}_4F_3 \left( \begin{matrix} -N, a, b, c \\ d, e, f \end{matrix} ; 1 \right) = \frac{(e-a)_N(f-a)_N}{(e)_N(f)_N} {}_4F_3 \left( \begin{matrix} -N, a, d-b, d-c \\ d, a-e-N+1, a-f-N+1 \end{matrix} ; 1 \right)
\end{equation}
(see~\cite{olver2010nist}, Eq.~(16.4.15), p. 407).
Applying this to \eqref{sude_dva_dva} with parameters
$$N = 1-\frac{n}{2}, \quad a = n+s, \quad b = s+1, \quad c= m,$$
$$d = \frac{n}{2} + s + 1, \quad e = m+n+s, \quad f = 2-n$$
and we get
\begin{align*}
{}_4F_3 \left(
\begin{matrix}
m,\; 1-\frac n2,\; n+s,\; s+1\\[4pt]
m+n+s,\; \frac n2+s+1,\; 2-n
\end{matrix};1
\right)
&=
\frac{
(m)_{\frac n2-1}\,(2-2n-s)_{\frac n2-1}
}{
(m+n+s)_{\frac n2-1}\,(2-n)_{\frac n2-1}
}\\
&\quad \times
{}_4F_3 \left(
\begin{matrix}
1-\frac n2,\; n+s,\; \frac n2+s+1-m,\; \frac n2\\[4pt]
\frac n2+s+1,\; 2-m-\frac n2,\; \frac{3n}{2}+s
\end{matrix};1
\right).
\end{align*}
Upon expanding Pochhammer symbols and multiplying by $\frac{(n-1)_m}{(n+s)_m}$ we see that \eqref{sude_dva_dva} is exactly equal to \eqref{sude}.
\end{remark}

\section{Discrete Wallach sets}
\begin{theorem}
 Suppose that $n \geq6$ is even, then 
 $$
  \lim_{s \to -n+1}(s +n-1)I_m(s)=-\frac{(n-1)_m}{(1)_m}\,\frac{\left(\frac{n}{2}-1\right)_m}{\left(\frac{n}{2}\right)_m}
 $$
 and
 $$
 \lim_{s \to -n+2}(s +n-2)I_m(s) = -\frac{(n-1)_m}{(2)_m}\left(\frac{m(m+1)n\left(\frac{n}{2}-1\right)\left(\frac{n}{2}-2\right)}{2\left(m+\frac{n}{2}-2\right)\left(m+\frac{n}{2}-1\right)\left(m+\frac{n}{2}\right)}\right).
 $$
 In particular, we have $-n+1, \, -n + 2 \in \mathcal{W}^d_1$. For $n=4$, we have $-3 \in \mathcal{W}^d_1.$
\end{theorem}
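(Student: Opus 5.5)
Our plan is to work from the terminating representation of Proposition~\ref{Prop 3.2}. For even $n$,
\[
I_m(s)=\frac{(n-1)_m}{(n+s)_m}\sum_{l=0}^{\frac n2-1}\frac{(m)_l(1-\frac n2)_l(n+s)_l(s+1)_l}{(m+n+s)_l(\frac n2+s+1)_l(2-n)_l\,l!}.
\]
This is a finite sum, since $(1-\frac n2)_l$ vanishes for $l\ge \frac n2$. Each term is rational in $s$, so all poles can be read off directly.

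The candidate sources of poles are the prefactor $1/(n+s)_m$, the factors $1/(m+n+s)_l$, and $1/(\frac n2+s+1)_l$. At $s=-n+1$ and $s=-n+2$ we proceed as follows.

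\begin{itemize}
\item Rewrite $\frac{(n+s)_l}{(n+s)_m(m+n+s)_l}=\frac{1}{(n+s+l)_m}$. With $s=-n+1+\epsilon$ this is $1/(1+l+\epsilon)_m$, which is regular, so these factors produce no pole.
\item The factor $(\frac n2+s+1)_l$ becomes $(2-\frac n2+\epsilon)_l$. For $n\ge 6$ it contains the factor $\epsilon$ precisely when $l\ge \frac n2-1$. The top term $l=\frac n2-1$ is therefore the only one with a simple pole; all other terms stay finite.
\end{itemize}

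Hence the residue at $s=-n+1$ equals $(n-1)_m$ times the $l=\frac n2-1$ coefficient, with the vanishing factor of $(2-\frac n2)_{\frac n2-1}$ removed, evaluated at $\epsilon=0$. The remaining factors are $(m)_{l}$, $(1-\frac n2)_l=(-1)^l(\frac n2-1)!$, $(s+1)_l=(2-n)_l$ (which cancels against the denominator), and $1/(1+l)_m$. What is left is a product of factorials and Pochhammer symbols. Using $(m)_{\frac n2-1}/(\frac n2)_m$-type identities, such as $(m)_{\frac n2-1}=\Gamma(m+\frac n2-1)/\Gamma(m)$ and $(\frac n2)_m/(\frac n2-1)_m=(m+\frac n2-1)/(\frac n2-1)$, it should simplify to $-(n-1)_m(\frac n2-1)_m/\big((1)_m(\frac n2)_m\big)$. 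The sign comes from $(-1)^l$ times the sign of the product of the nonzero factors of $(2-\frac n2)_{\frac n2-1}$, which contains $\frac n2-2$ negative factors. Since every factor in the result is positive for $m\ge1$, all coefficients $c_{-1,m}$ have one sign, and the pole is simple. This gives $-n+1\in\mathcal W^d_1$, provided we note that the $m=0$ coefficient is $0$, because $I_0\equiv1$.

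At $s=-n+2$ the same bookkeeping applies: $(\frac n2+s+1)_l=(3-\frac n2+\epsilon)_l$ vanishes for $l\ge \frac n2-2$, and $(n+s+l)_m=(2+l)_m$ is regular. Now the two terms $l=\frac n2-2$ and $l=\frac n2-1$ each contribute a simple pole. Also, $(s+1)_l=(3-n)_l$ no longer cancels the denominator $(2-n)_l$ but leaves the ratio $(2-n+l)/(2-n)$. We will compute the two residues separately, factor out $(n-1)_m/(2)_m$ and common Pochhammers, and combine them over the common denominator $(m+\frac n2-2)(m+\frac n2-1)(m+\frac n2)$. The main obstacle is the algebra of this step: we must verify that the sum collapses to the single rational expression
\[
\frac{m(m+1)n(\frac n2-1)(\frac n2-2)}{2(m+\frac n2-2)(m+\frac n2-1)(m+\frac n2)}
\]
rather than a difference whose sign varies with $m$. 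Once it does, the sign is manifestly uniform for $m\ge1$ and $n\ge6$, which gives $-n+2\in\mathcal W^d_1$.

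For $n=4$ the sum has only $l=0,1$, and the factor $(\frac n2+s+1)_1=s+3$. We will compute the residue at $s=-3$ directly; it comes from the $l=1$ term, and the $1/(n+s)_m$ poles at $s=-3$ cancel as above. We will check that it has constant sign in $m$. At $s=-2$ the $(n+s)$-factors give $I_m(-2)=1$, so $-2$ is not a pole.
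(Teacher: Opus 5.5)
Your route is the paper's own: use the terminating ${}_4F_3$ of Proposition~\ref{Prop 3.2}, note that only $(\frac n2+s+1)_l$ can vanish, and isolate the term $l=\frac n2-1$ at $s=-n+1$, and the terms $l=\frac n2-2,\frac n2-1$ at $s=-n+2$. Your rewriting $\frac{(n+s)_l}{(n+s)_m(m+n+s)_l}=\frac1{(n+s+l)_m}$ and your sign count are correct.

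One step fails as written: the simplification you expect at $s=-n+1$. Your own bookkeeping, together with $(m)_{\frac n2-1}=\Gamma(m+\frac n2-1)/\Gamma(m)=(\frac n2-1)_m\,(\frac n2-2)!/\Gamma(m)$, gives
\begin{equation*}
\lim_{s\to-n+1}(s+n-1)I_m(s)=-(n-1)_m\,\frac{(m)_{\frac n2-1}}{(\frac n2)_m\,(\frac n2-2)!}
=-\frac{(n-1)_m}{\Gamma(m)}\,\frac{(\frac n2-1)_m}{(\frac n2)_m}.
\end{equation*}
The denominator is $\Gamma(m)=(m-1)!$, not $(1)_m=m!$. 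The first displayed formula of the statement is therefore off by a factor of $m$. For example, with $n=6$ and $m=2$, evaluating the rational function of Proposition~\ref{Prop 3.2} directly gives a residue of $-15$, whereas the formula with $(1)_m$ gives $-15/2$. The paper's proof itself ends with $\Gamma(m)$, so this is a misprint in the statement. It does not change the sign, so $-n+1\in\mathcal W^d_1$ still follows, but you should not try to make the computation land on $(1)_m$.

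The algebra you defer at $s=-n+2$ does close up. With the same bookkeeping, the terms $l=\frac n2-2$ and $l=\frac n2-1$ contribute
\begin{equation*}
-\frac{n(\frac n2-1)(\frac n2-2)\,(n-1)_m}{4\,\Gamma(m)\,(m+\frac n2-2)(m+\frac n2-1)}
\quad\text{and}\quad
\frac{n(\frac n2-1)(\frac n2-2)\,(n-1)_m}{4\,\Gamma(m)\,(m+\frac n2-1)(m+\frac n2)}.
\end{equation*}
Since $\frac1{m+\frac n2}-\frac1{m+\frac n2-2}=\frac{-2}{(m+\frac n2-2)(m+\frac n2)}$, this gives the second formula. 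Note that $\frac{m(m+1)}{(2)_m}=\frac1{\Gamma(m)}$, so the second formula is consistent with the corrected first one.

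For $n=4$ there is nothing to cancel at $s=-3$: there $n+s=1$, so $1/(n+s)_m$ is regular. The residue is $-\frac{m(m+2)}{2}$, which is the $n=4$ case of the corrected first formula and is negative for all $m\ge1$.
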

\begin{proof}
As $n \geq 6$ the function $I_m(s)$ is singular for $s = -n+1$ and $s = -n+2$. From Proposition~\ref{Prop 3.2} we have
\begin{align*}
    \lim_{s \to -n+1}(s +n-1)I_m(s) &=\lim_{s \to -n+1}(s+n-1) \frac{(n-1)_m}{(n+s)_m}\\
    & \qquad\times\;{}_4F_3 \left( \begin{matrix} m,\; 1 - \frac{n}{2},\; n+s,\; s+1 \\[4pt] m + n + s,\; \frac{n}{2} + s + 1,\; 2-n \end{matrix} \;;\; 1 \right)\\
    &= \frac{(n-1)_m}{(1)_m}\\
    & \qquad \times\lim_{s \to -n+1} \sum_{l=0}^{\frac{n}{2}-1} \frac{(m)_l \left(1 - \frac{n}{2}\right)_l (n+s)_l (s+1)_l}{(m + n + s)_l \left(\frac{n}{2} + s + 1\right)_l (2 - n)_l} \frac{1}{l!}.
\end{align*}
The only non-zero term in the sum is the one corresponding to \(l=\frac{n}{2}-1\). Using the fact $$
\lim_{s\to -n+1} \,\frac{(s+n-1)}{\left(\frac n2+s+1\right)_{\frac n2-1}}
=
\frac{(-1)^{\frac n2-2}}{\left(\frac n2-2\right)!}
$$
we obtain
$$
\lim_{s\to -n+1}(s+n-1)I_m(s)
=
-\frac{(n-1)_m}{\Gamma(m)}\,\frac{\left(\frac{n}{2}-1\right)_m}{\left(\frac{n}{2}\right)_m}.
$$
Similarly using 
\[
\lim_{s\to -n+2} \frac{(s+n-2)}{\left(\frac n2+s+1\right)_{\frac n2-1}}
=
\frac{(-1)^{\frac n2-3}}{\left(\frac n2-3\right)!}, \qquad \lim_{s\to -n+2} \frac{(s+n-2)}{\left(\frac n2+s+1\right)_{\frac n2-2}}
=
\frac{(-1)^{\frac n2-3}}{\left(\frac n2-3\right)!}
\]
we obtain 
\begin{align*}
     \lim_{s \to -n+2}(s +n-2)I_m(s) &=\\
    \frac{(n-1)_m}{(2)_m}\lim_{s \to -n+2} &\sum_{l=0}^{\frac{n}{2}-1} \frac{(s+n-2)\,(m)_l \left(1 - \frac{n}{2}\right)_l (n+s)_l (s+1)_l}{(m + n + s)_l \left(\frac{n}{2} + s + 1\right)_l (2 - n)_l} \frac{1}{l!}\\
     &= \frac{(n-1)_m}{(2)_m}\left(- \frac{m(m+1)n\left(\frac{n}{2}-1\right)\left(\frac{n}{2}-2\right)}{4\left(m+\frac{n}{2}-2\right)\left(m+\frac{n}{2}-1\right)} + \frac{m(m+1)n\left(\frac{n}{2}-1\right)\left(\frac{n}{2}-2\right)}{4\left(m+\frac{n}{2}-1\right)\left(m+\frac{n}{2}\right)}\right)\\
    &= -\frac{(n-1)_m}{(2)_m}\left(\frac{m(m+1)n\left(\frac{n}{2}-1\right)\left(\frac{n}{2}-2\right)}{2\left(m+\frac{n}{2}-2\right)\left(m+\frac{n}{2}-1\right)\left(m+\frac{n}{2}\right)}\right).
\end{align*}
Hence, we conclude that
$
-n+1,\,-n+2 \in \mathcal{W}^d_1.
$
When $n =4$, only $-3$ belongs to $\mathcal{W}_1^d$, since $I_m(s)$ is regular at $s = -2$.
\end{proof}

\begin{remark}
    Note that Proposition~\ref{Prop 3.2} shows that for even $n$, $I_m(s)$ has only simple poles at $s = -\frac{n}{2}-1 - k$ for $k \geq 0$, hence $\mathcal{W}^d_k = \emptyset$ for $k > 1$.
\end{remark}

\begin{remark}\label{remark}
    It was shown in \cite[Proposition 1]{BEY2025} that for $m > 0$
    \begin{equation}\label{Dirichlet}
\lim_{s\rightarrow-n}(n+s)\,I_m(s) 
=
\begin{cases}
\dfrac{(n-1)_m}{\Gamma(m)}, & \text{if } n \text{ is even}, \\[2ex]
\dfrac{2(n-1)_m}{\Gamma(m)}, & \text{if } n \text{ is odd},
\end{cases}
\end{equation} 
with the residue at $s = -n-1$ being identical.
We can recover this result from Proposition~\ref{Pro 3.1}. Suppose that $n$ is odd. Then, from Proposition~\ref{Pro 3.1}, we have
\begin{align*} 
I_m(s) &= \frac{(n-1)_m^2}{(n+s)_m}\,
\Gamma\!\left(
\begin{matrix}
s+\frac{n}{2}+1,\; s+2n-1,\; 1-m-\frac{n}{2}
\\
s-m+\frac{n}{2}+1,\; 1-\frac{n}{2},\; m+2n+s-1
\end{matrix}
\right) \\
&\qquad \quad \times {}_4F_3 \left( \begin{matrix} n + s, \, m + n - 1, \, m + s + \frac{3n}{2} - 1, \, m \\ m + n + s, \, m + \frac{n}{2}, \, m + 2n + s - 1 \end{matrix} ; 1 \right) \nonumber \\[1em]
& + (n-1)_m^2  \Gamma\!\left(
\begin{matrix}
\frac{n}{2},\; s+2n-1,\; m+\frac{n}{2}-1,\; s+n
\\
m,\; m+n-1,\; s+\frac{3n}{2},\; m+s+\frac{3n}{2}-1
\end{matrix}
\right) \nonumber \\
&\qquad \quad \times {}_4F_3 \left( \begin{matrix} \frac{n}{2}, \, s - m + \frac{n}{2} + 1, \, 1 - \frac{n}{2}, \, s + n \\ s + \frac{3n}{2}, \, 2 - m - \frac{n}{2}, \, s + \frac{n}{2} + 1 \end{matrix} ; 1 \right) \nonumber.
\end{align*}
Using the facts
$$\lim_{s\rightarrow-n} \frac{(s+n)}{(s+n)_{m+k}} = \frac{1}{(m+k-1)!}, \quad \lim_{s\rightarrow-n} (s+n)\Gamma(s+n) = 1,$$
together with
$$\lim_{s\rightarrow-n-1} \frac{s+n+1}{(s+n)_{m+k}} = -\frac{1}{(m+k-2)!} \quad \text{for $m+k\geq1$}, \quad \lim_{s\rightarrow-n-1} (s+n+1)\Gamma(s+n) = -1$$
one can see, from a direct calculation, that each term in the residue of $I_m(s)$ at  both $s = -n$ and $s = -n-1$ is equal to 
$$
\frac{(n-1)_m}{\Gamma(m)}.
$$
As the first term vanishes when $n$ is even, our conclusion follows.
\end{remark}
\begin{theorem}
    For $n$ odd, we have
    \begin{align*}
  \lim_{s\rightarrow -n+\frac{1}{2}} (s+n-\frac{1}{2})\,I_m(s) &=  - \Gamma\!\left(
\begin{matrix}
m+n-1,\; m-\frac12,\; \frac n2,\; \frac n2,\; n-\frac12,\; n-\frac12
\\[3pt]
\frac12, \; m, \; m, \; \frac{n-1}{2}, \; n, \; n-1, \; n-1, \; \frac{n+1}{2}
\end{matrix}
\right)\\[3pt]
& \quad \times {}_4F_3\left( \begin{matrix} \frac{1}{2}, & \frac{n}{2}, & n-\frac{1}{2}, & 1-m \\ \frac{n+1}{2}, & n, & \frac{3}{2}-m \end{matrix} \;;\; 1 \right),
\end{align*}
and
\begin{align*}
     \lim_{s\rightarrow -n-\frac{1}{2}} (s+n+\frac{1}{2})\,I_m(s) &=
-\Gamma\!\left(
\begin{matrix}
\frac32, \; m+n-1,\; \frac n2,\; n-\frac32,\; n+\frac12,\;
m+\frac n2+\frac12,\; m-\frac32
\\[3pt]
n-1,\; n-1,\; m,\; m,\; m+\frac n2-\frac32,\;
\frac{n+1}{2},\; \frac{n+3}{2},\; 1-\frac n2,\; n
\end{matrix}
\right)
\\[3pt]
&\quad \times
{}_4F_3 \left(
\begin{matrix}
\frac32,\; \frac n2,\; n+\frac12,\; 1-m\\[3pt]
\frac{n+3}{2},\; \frac52-m,\; n
\end{matrix};1
\right).
 \end{align*}
In particular, we have  $-n-\frac{1}{2}, \, -n+\frac{1}{2} \in \mathcal{W}^d_1$.
\end{theorem}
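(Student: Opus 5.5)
We will use the odd-$n$ formula \eqref{liche} of Proposition~\ref{Pro 3.1}. Near $s_0=-n\pm\frac12$ we locate which Gamma factors are singular. For odd $n$ the half-integers $s_0+\frac n2+1=-\frac n2+\frac32\pm\frac12$ are in fact integers. For $s_0=-n+\frac12$ this equals $\frac{3-n}{2}\le 0$, and for $s_0=-n-\frac12$ it equals $\frac{1-n}{2}\le 0$ (when $n\ge3$). So $\Gamma(s+\frac n2+1)$ in the first term has a simple pole.

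The other Gamma factors behave as follows.
\begin{itemize}
\item $s+2n-1=n-\frac12$ or $n-\frac32$, and $s+n=\pm\frac12$; these are regular.
\item In the denominators, $s-m+\frac n2+1$ is an integer $\le 0$ for $m\ge1$, so $1/\Gamma(s-m+\frac n2+1)$ has a simple zero. This cancels the pole of $\Gamma(s+\frac n2+1)$, leaving the finite ratio $(s-m+\frac n2+1)_m$.
\item In the second term the lower parameter $s+\frac n2+1$ of the ${}_4F_3$ tends to a nonpositive integer, so that series blows up.
\end{itemize}
Hence we expect the residue to come entirely from the second term, in the form of \eqref{TheIdentity}.

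\textbf{Step 1: the first term is analytic at $s_0$.} We check that the ratio $\Gamma(s+\frac n2+1)/\Gamma(s-m+\frac n2+1)=(s-m+\frac n2+1)_m$ is finite. The first ${}_4F_3$ has excess $(m+n+s)+(m+\frac n2)+(m+2n+s-1)-(n+s)-(m+n-1)-(m+s+\frac{3n}{2}-1)-m=1$. It is balanced, hence convergent at $1$ and analytic in $s$ near $s_0$. Its parameters are noninteger half-integers there, so no lower parameter is a nonpositive integer. We also check that $1/(n+s)_m$ is regular, which holds since $n+s=\pm\frac12$. Thus the first term contributes nothing to the residue. The same care is needed for $\Gamma(1-m-\frac n2)/\Gamma(1-\frac n2)$, which is finite since $n$ is odd.

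\textbf{Step 2: the residue of the second term.} Set $s=s_0+\epsilon$ and write $\frac n2+s+1=-M+\epsilon$. We have $M=\frac{n-3}{2}$ for $s_0=-n+\frac12$ and $M=\frac{n-1}{2}$ for $s_0=-n-\frac12$. Applying \eqref{TheIdentity} with $z=1$ gives the limit of $\epsilon\,{}_4F_3$ as
\[
\frac{(-1)^M}{M!(M+1)!}\,\frac{(\frac n2)_{M+1}(s_0-m+\frac n2+1)_{M+1}(1-\frac n2)_{M+1}(s_0+n)_{M+1}}{(s_0+\frac{3n}{2})_{M+1}(2-m-\frac n2)_{M+1}}
\]
times a ${}_4F_3$ with all parameters shifted by $M+1$ and new denominator $M+2$. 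The hypothesis of \eqref{TheIdentity} (no upper parameter a negative integer $>-M$) must be checked. Here $s_0-m+\frac n2+1$ is an integer, so we need it not to be in $(-M,0)$ while $m$ is small. If it is, the problem is a genuine terminating issue, and we will instead truncate: the upper parameter then kills the terms beyond $l=M$, and we must see whether the limit vanishes. We expect that $(s_0-m+\frac n2+1)_{M+1}$ instead contains a factor that makes the shifted series terminating, producing the factor $1-m$. The shifted upper parameters become $\frac n2+M+1$, $s_0-m+\frac n2+M+2$, $1-\frac n2+M+1$ and $s_0+n+M+1$. For $s_0=-n+\frac12$ these are $n-\frac12$, $1-m$, $\frac12$ and $\frac n2$, and the lower ones are $n$, $\frac32-m$ and $\frac{n+1}{2}$. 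This matches the stated ${}_4F_3$ exactly, which confirms the plan; the case $-n-\frac12$ similarly yields $\frac32,\frac n2,n+\frac12,1-m$ over $\frac{n+3}{2},\frac52-m,n$.

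\textbf{Step 3: the sign.} We multiply by the prefactor $(n-1)_m^2\Gamma(\frac n2)\Gamma(s_0+2n-1)\Gamma(m+\frac n2-1)\Gamma(s_0+n)/(\Gamma(m)\Gamma(m+n-1)\Gamma(s_0+\frac{3n}{2})\Gamma(m+s_0+\frac{3n}{2}-1))$. We then convert every Pochhammer symbol with negative integer or half-integer base into Gamma quotients via reflection, e.g.\ $(1-\frac n2)_{M+1}$ and $(2-m-\frac n2)_{M+1}$, to reach the stated Gamma products. The main obstacle is this bookkeeping, and above all tracking the signs $(-1)^M$ from reflections so that the overall sign is negative and independent of $m$.

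\textbf{Step 4: the Wallach set conclusion.} For membership in $\mathcal W^d_1$ we must show that the residues have constant sign in $m\ge1$. The residue at $m=0$ is $0$, since $I_0\equiv1$. The Gamma prefactors are positive for $m\ge2$, but $\Gamma(m-\frac12)$ and $\Gamma(m-\frac32)$ need care for $m=1$. The terminating ${}_4F_3$ has upper parameter $1-m$ and lower parameter $\frac32-m$ (or $\frac52-m$). For $l<m$ the ratio $(1-m)_l/(\frac32-m)_l$ is positive, since both factors are products of the same number of negative numbers. All other parameters are positive, so every term is positive and the series is $\ge1$. For $-n-\frac12$ we need $(1-m)_l/(\frac52-m)_l$, which can change sign at $l=m-1$. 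There we would bound the last term or simply check positivity; this is the delicate point. We also note the factor $1/\Gamma(1-\frac n2)$, whose sign must be combined with the reflection signs from Step 3.
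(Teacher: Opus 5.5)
\textbf{The gap is Step 4 at $s_0=-n-\frac12$.} Your route is the paper's route: the odd case of Proposition~\ref{Pro 3.1}, regularity of the first term, \eqref{TheIdentity} applied to the second ${}_4F_3$ (your shifted parameters are correct), reflection bookkeeping, and termwise positivity. But the conclusion $-n-\frac12\in\mathcal W^d_1$ rests entirely on Step 4 at $s_0=-n-\frac12$, and there you leave it open.

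The remaining $m$-dependent Gamma factors are positive. So you must show that $\Gamma(m-\frac32)$ times the ${}_4F_3$ with $1-m$ over $\frac52-m$ has constant sign for $m\ge1$. Neither factor has constant sign on its own. For $n=3$ the ${}_4F_3$ equals $1$ at $m=1$ and $-\frac34$ at $m=2$, while $\Gamma(m-\frac32)<0$ only at $m=1$. So no termwise argument, and no bound on the $\ell=m-1$ term alone, will work. The paper's ``exactly the same argument'' skips this as well.

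One way to close it uses the fact that the series is balanced and terminating. Apply \eqref{Bailey} with $N=m-1$, $a=\frac32$, $b=\frac n2$, $c=n+\frac12$, $d=n$, $e=\frac{n+3}{2}$, $f=\frac52-m$, then use reflection:
\begin{equation*}
\Gamma\!\left(m-\tfrac32\right)\,{}_4F_3\!\left(\begin{matrix}\frac32,\ \frac n2,\ n+\frac12,\ 1-m\\ \frac{n+3}{2},\ \frac52-m,\ n\end{matrix};1\right)
=-2\sqrt{\pi}\,(m-1)!\,\frac{(\frac n2)_{m-1}}{(\frac{n+3}{2})_{m-1}}\,{}_4F_3\!\left(\begin{matrix}1-m,\ \frac32,\ \frac n2,\ -\frac12\\ n,\ 2-\frac n2-m,\ 1\end{matrix};1\right).
\end{equation*}
For $1\le\ell\le m-1$ you have $0<(1-m)_\ell/(2-\frac n2-m)_\ell<1$ and $(-\frac12)_\ell<0$. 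So the new series is $1$ minus positive terms, each dominated by the corresponding term of ${}_3F_2(-\frac12,\frac32,\frac n2;1,n;1)$. By Watson's theorem that ${}_3F_2$ equals $\sqrt\pi\,\Gamma(\frac{n+1}{2})\Gamma(\frac n2)/[\Gamma(\frac14)\Gamma(\frac54)\Gamma(\frac n2+\frac34)\Gamma(\frac n2-\frac14)]>0$. Hence the product is negative for every $m\ge1$.

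\textbf{Step 3 is only announced; when you do it, do not force the sign.} For $-n+\frac12$ the bookkeeping reproduces the stated formula. Two factors $(-1)^{\frac{n-1}{2}}$ appear, one from $1/\Gamma(1-\frac n2)$ and one from $\Gamma(2-m-\frac n2)/\Gamma(\frac32-m)$, and they cancel.

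For $-n-\frac12$ only one reflection occurs:
$\Gamma(2-m-\frac n2)/\Gamma(\frac52-m)=-(-1)^{\frac{n-1}{2}}\Gamma(m-\frac32)/\Gamma(m+\frac n2-1)$.
The correct leading factor is therefore $(-1)^{\frac{n-1}{2}}$, not $-1$. Check $n=5$, $m=1$:
\begin{itemize}
\item only $\ell=3$ survives, and $\lim\epsilon\,{}_4F_3>0$;
\item the prefactor contains $\Gamma(s_0+n)=\Gamma(-\frac12)<0$;
\item so the residue is negative, about $-4.27$;
\item the displayed right-hand side is about $+4.27$, because $\Gamma(-\frac32)>0$.
\end{itemize}
The residue is in fact negative for all $m\ge1$ and all odd $n$, so the Wallach-set conclusion still stands.

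\textbf{The hypothesis of \eqref{TheIdentity} is not the real issue.} Since $s_0-m+\frac n2+1=-M-m\le -M-1$, it is never in the excluded range. What you do need to check is that this parameter also moves with $\epsilon$. As a result, the terms with $\ell>M+m$ carry a compensating factor $\epsilon$ and vanish in the limit, uniformly because the series is balanced.
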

\begin{proof}
        When $n$ is odd, from Proposition~\ref{Pro 3.1} we have
\begin{align} 
I_m(s) &= \frac{(n-1)_m^2}{(n+s)_m}\,
\Gamma\!\left(
\begin{matrix}
s+\frac{n}{2}+1,\; s+2n-1,\; 1-m-\frac{n}{2}
\\
s-m+\frac{n}{2}+1,\; 1-\frac{n}{2},\; m+2n+s-1
\end{matrix}
\right) \label{liche_thm} \\[3pt]
&\qquad \quad \times {}_4F_3 \left( \begin{matrix} n + s, \, m + n - 1, \, m + s + \frac{3n}{2} - 1, \, m \\ m + n + s, \, m + \frac{n}{2}, \, m + 2n + s - 1 \end{matrix} ; 1 \right) \nonumber \\[1em]
& + (n-1)_m^2  \Gamma\!\left(
\begin{matrix}
\frac{n}{2},\; s+2n-1,\; m+\frac{n}{2}-1,\; s+n
\\
m,\; m+n-1,\; s+\frac{3n}{2},\; m+s+\frac{3n}{2}-1
\end{matrix}
\right) \label{sude_thm} \\[3pt]
&\qquad \quad \times {}_4F_3 \left( \begin{matrix} \frac{n}{2}, \, s - m + \frac{n}{2} + 1, \, 1 - \frac{n}{2}, \, s + n \\ s + \frac{3n}{2}, \, 2 - m - \frac{n}{2}, \, s + \frac{n}{2} + 1 \end{matrix} ; 1 \right) \nonumber.
\end{align}
We first compute residue at $s = -n+\frac1 2.$
For $n > 3$ each Gamma function in \eqref{liche_thm} is regular at $s = -n + \frac{1}{2}$. For $n = 3$ we have $$\lim_{s \rightarrow-n+\frac{1}{2}}\frac{\Gamma(s+\frac{n}{2}+1)}{\Gamma(s-m+\frac{n}{2}+1)} = (-1)^m\,m!,$$
hence the first prefactor is regular for each $n \geq 3$.
Also the prefactor of \eqref{sude_thm} is regular, thus 
\begin{align*}
   \lim_{s\rightarrow -n+\frac{1}{2}} (s+n-\frac{1}{2})\,I_m(s) &=  G_m \lim_{s\rightarrow -n+\frac{1}{2}} (s+n-\frac{1}{2})\,{}_4F_3 \left( \begin{matrix} \frac{n}{2}, \, s - m + \frac{n}{2} + 1, \, 1 - \frac{n}{2}, \, s + n \\ s + \frac{3n}{2}, \, 2 - m - \frac{n}{2}, \, s + \frac{n}{2} + 1 \end{matrix} ; 1 \right)
\end{align*}
where 
$$
G_m = (n-1)_m^2\,  \Gamma\!\left(
\begin{matrix}
\frac{n}{2},\; n-\frac{1}{2},\; m+\frac{n}{2}-1,\; \frac{1}{2}
\\
m,\; m+n-1,\; \frac{n+1}{2},\; m+\frac{n}{2}-\frac{1}{2}
\end{matrix}
\right).
$$
Using \eqref{TheIdentity} we get
\begin{align}
    \lim_{s\rightarrow -n+\frac{1}{2}} (s+n-\frac{1}{2})\,&{}_4F_3 \left( \begin{matrix} \frac{n}{2}, \, s - m + \frac{n}{2} + 1, \, 1 - \frac{n}{2}, \, s + n \\ s + \frac{3n}{2}, \, 2 - m - \frac{n}{2}, \, s + \frac{n}{2} + 1 \end{matrix} ; 1 \right)\nonumber\\
    &= 
    -\frac{\Gamma(m + \frac{n-1}{2})
\Gamma\left(n-\frac12\right)
}{\Gamma(m)\,
\Gamma\left(\frac{n-1}{2}\right)
\Gamma\left(1-\frac n2\right)
\Gamma(n)
}\frac{\Gamma\!\left(2-m-\frac{n}{2}\right)}{\Gamma\!\left(\frac{3}{2}-m\right)}\\
& \times
_4F_3\left( \begin{matrix} \frac{1}{2}, & \frac{n}{2}, & n-\frac{1}{2}, & 1-m \\ \frac{n+1}{2}, & n, & \frac{3}{2}-m \end{matrix} \;;\; 1 \right),
\end{align}
here we rewrote Pochhammer symbols as Gamma functions.
Using Legendre's formula \begin{equation}\label{lagrange}
\Gamma(z)\Gamma(1-z)=\frac{\pi}{\sin(\pi z)},
\end{equation} we get 
$$
\frac{\Gamma\!\left(2-m-\frac{n}{2}\right)}{\Gamma\!\left(\frac{3}{2}-m\right)} = \frac{\Gamma\!\left(m - \frac{1}{2}\right)}{\Gamma\!\left(m + \frac{n}{2} - 1\right)}.
$$
Putting all of this together we get
\begin{align*}
  \lim_{s\rightarrow n+\frac{1}{2}} (s+n-\frac{1}{2})\,I_m(s) &=  - \Gamma\!\left(
\begin{matrix}
m+n-1,\; m-\frac12,\; \frac n2,\; \frac n2,\; n-\frac12,\; n-\frac12
\\[3pt]
\frac12, \; m, \; m, \; \frac{n-1}{2}, \; n, \; n-1, \; n-1, \; \frac{n+1}{2}
\end{matrix}
\right)\\
& \quad \times {}_4F_3\left( \begin{matrix} \frac{1}{2}, & \frac{n}{2}, & n-\frac{1}{2}, & 1-m \\ \frac{n+1}{2}, & n, & \frac{3}{2}-m \end{matrix} \;;\; 1 \right),
\end{align*}
 now as $\frac{(1-m)_{\ell}}{(\frac{3}{2}-m)_{\ell}}$ is positive for $0 \leq \ell \leq m-1$ we see that $-n+\frac{1}{2} \in\mathcal{W}^d_1$.
 
Exactly the same argument shows, that 
\begin{align*}
     \lim_{s\rightarrow -n-\frac{1}{2}} (s+n+\frac{1}{2})\,I_m(s) &=
-\Gamma\!\left(
\begin{matrix}
\frac32, \; m+n-1,\; \frac n2,\; n-\frac32,\; n+\frac12,\;
m+\frac n2+\frac12,\; m-\frac32
\\[3pt]
n-1,\; n-1,\; m,\; m,\; m+\frac n2-\frac32,\;
\frac{n+1}{2},\; \frac{n+3}{2},\; 1-\frac n2,\; n
\end{matrix}
\right)
\\[6pt]
&\quad \times
{}_4F_3 \left(
\begin{matrix}
\frac32,\; \frac n2,\; n+\frac12,\; 1-m\\[3pt]
\frac{n+3}{2},\; \frac52-m,\; n
\end{matrix};1
\right),
 \end{align*}
thus $-n-\frac{1}{2} \in \mathcal{W}_1^d$ and our conclusion follows.
\end{proof}
\begin{theorem}
    For $n$ odd, we have
    \begin{align}\nonumber
\lim_{s \rightarrow -2n +1 -k}(s + 2n-1+k)^2\,I_{m}(s) &=\;(-1)^{\frac{n+1}{2}}\,
\frac{}{} \,\frac{2\pi}{k!}\,\frac{(2n+k-2)!}{(n-1)!^2}\,
\frac{\Gamma\!\left(k+\frac n2\right)}{\Gamma\!\left(1-\frac n2\right)^2}\,
\frac{\Gamma(n-1+m)^2}
{\Gamma(n-1)^2\,\Gamma(m)^2\,\Gamma\!\left(\frac{3n}{2}+k-1\right)}\,\\\label{opi}
&\qquad\times
{}_4F_3\!\left(
\begin{matrix}
m+n-1,\; -\frac k2,\; \frac{1-k}{2},\; 1-m \\
n,\; 1-\frac n2-k,\; \frac{n+1}{2}
\end{matrix}
;1
\right).
\end{align}
In particular, $\mathcal{W}^d_2 = \{-2n+1-k, \text{ for $k =0, 1,2, \dots$}\}$.
\end{theorem}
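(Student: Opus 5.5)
We start from the odd-$n$ formula \eqref{liche} of Proposition~\ref{Pro 3.1} and locate the double poles at $s_k:=-2n+1-k$, $k=0,1,2,\dots$. In both summands the prefactor contains $\Gamma(s+2n-1)$, which has a simple pole at $s_k$. The idea is that a second simple pole comes from one of the ${}_4F_3$ factors, and that nothing else in the expression is singular there. Since $n$ is odd, $s_k+\frac n2+1$, $s_k+n$ and $s_k+\frac{3n}{2}$ are either half-integers or integers at which we must check the Gamma factors individually. One checks that $\Gamma(s+n)$ in the second summand has a pole at $s_k$ (because $s_k+n=-n+1-k\le 0$). Likewise $s+\frac{3n}{2}$ is a half-integer, so the denominator Gammas there are finite and nonzero. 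So we will first tabulate the pole orders of every Gamma factor at $s=s_k$ in both summands.

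\textbf{The two summands.} In the second summand the prefactor has $\Gamma(s+2n-1)\Gamma(s+n)$, giving a pole of order two. The ${}_4F_3$ has upper parameter $s+n$ and lower parameter $s+\frac n2+1$. At $s_k$ the upper parameter $s+n$ is a nonpositive integer, so the series terminates; the lower parameters are half-integers, so the series is finite and regular. This summand therefore contributes $(s-s_k)^{-2}$ times the product of Gamma residues and a terminating ${}_4F_3$ evaluated at $s_k$.

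In the first summand, $1/(n+s)_m$ may vanish or not, and $\Gamma(s+2n-1)$ has a simple pole. The ${}_4F_3$ has lower parameter $m+n+s$, which is a nonpositive integer at $s_k$ when $m\le n-1+k$. It also has upper parameter $n+s$, which is more negative, so the series is terminated there. We handle this with \eqref{TheIdentity} after writing $s=s_k+\epsilon$. We expect the pole orders to match so that both summands contribute to the $(s-s_k)^{-2}$ coefficient.

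The plan is to compute
\[
\lim_{\epsilon\to0}\epsilon^2 I_m(s_k+\epsilon)
\]
as the sum of the two contributions. We then combine them into the single terminating series ${}_4F_3$ with parameters $m+n-1,-\frac k2,\frac{1-k}{2},1-m$ appearing in \eqref{opi}. The quadratic parameters $-\frac k2,\frac{1-k}{2}$ suggest using the duplication formula $(2a)_{2\ell}=4^{\ell}(a)_\ell(a+\frac12)_\ell$. Alternatively they may come from a Whipple/Bailey transformation \eqref{Bailey} of the terminating series coming from the second summand, after reversing the order of summation in $\ell$.

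\textbf{Main obstacle.} We expect the main difficulty to be this bookkeeping: reconciling the two contributions, including the case distinction $m\le n-1+k$ versus $m>n-1+k$ in the first summand, into the closed form \eqref{opi}. The reflection formula \eqref{lagrange} will convert $\Gamma(1-\frac n2)^{-2}$ and the sign $(-1)^{\frac{n+1}{2}}$, and produce the factor $\pi$.

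\textbf{Consequences for $\mathcal W^d_2$.} We show that the right-hand side of \eqref{opi} has constant sign in $m\ge1$. The prefactor sign depends only on $n$ and $k$, since $\Gamma(1-\frac n2)^2>0$. For the ${}_4F_3$, the factors $(1-m)_\ell$, $(-\frac k2)_\ell(\frac{1-k}{2})_\ell$ and $(1-\frac n2-k)_\ell$ alternate in sign coherently, so every term is nonnegative; the $\ell=0$ term equals $1$, so the sum is positive. Hence every $s_k$ is a pole of order exactly two with coefficients of constant sign, and $s_k\in\mathcal W^d_2$.

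For the reverse inclusion we argue that all other poles are simple. By \eqref{Kampee}, the singularities of $I_m$ lie among the poles of $\Gamma(n+s)\Gamma(s+\frac n2+1)\Gamma(s+2n-1)$. For odd $n$, the half-integer poles come only from $\Gamma(s+\frac n2+1)$ and are therefore simple. The integer poles overlap, and a double pole can occur only where $\Gamma(n+s)$ and $\Gamma(s+2n-1)$ are both singular, i.e.\ at the points $s_k$. This also gives $\mathcal W^d_r=\emptyset$ for $r>2$.
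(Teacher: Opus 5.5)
There is a genuine gap: the coefficient of $(s-s_k)^{-2}$ is never actually computed. Everything that turns the two summands of \eqref{liche} into the closed form of the statement is deferred as ``bookkeeping'' you expect to work out, but that is the content of the theorem.

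The singularity analysis you sketch for the first summand at integer $m$ is also wrong in several places:
\begin{itemize}
\item The factor $1/(n+s)_m=\Gamma(n+s)/\Gamma(m+n+s)$ never vanishes at $s_k$. It is finite and nonzero for $m\le n-1+k$, and has a simple pole for $m\ge n+k$.
\item When the lower parameter $m+n+s$ tends to $-M$ with $M<N:=n-1+k$, the more negative upper parameter $n+s\to -N$ does not terminate the series. The terms with $M<\ell\le N$ blow up, so the ${}_4F_3$ has a pole.
\item For $m\le k$ the lower parameter $m+2n+s-1$ also tends to a nonpositive integer, while the prefactor $1/\Gamma(m+2n+s-1)$ vanishes. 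Two denominator parameters then degenerate at once, and some terms are of order $(s-s_k)^{-2}$. A single application of \eqref{TheIdentity} does not cover this.
\end{itemize}
So your case split $m\le n-1+k$ versus $m>n-1+k$ is incomplete. In each case you would also get a differently shaped expression that still has to be matched with the second summand.

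The paper avoids all of this. It writes $(n-1)_m^2/(n+s)_m=\Gamma(n-1+m)^2\Gamma(n+s)/(\Gamma(n-1)^2\Gamma(m+n+s))$, so both summands carry the same double pole $\Gamma(s+2n-1)\Gamma(n+s)$. Using continuity of the limit in $m$, it then replaces $m$ by a non-integer $m+\epsilon$. The first ${}_4F_3$ at $s_k$ becomes an ordinary terminating series with upper parameter $1-n-k$, exactly like the second one (compare \eqref{Liche_7} and \eqref{Liche_8}), and no case distinction arises.

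Two ideas your plan is missing then do the work:
\begin{itemize}
\item Bailey's transformation \eqref{Bailey}, applied to each terminating series, turns both into ${}_4F_3(1-n-k,\,m+n-1,\,-k,\,\frac n2;\,m-k,\,n,\,1-\frac n2-k;1)$. After \eqref{lagrange} the two prefactors coincide, so the two summands contribute \emph{equally}; this is where the factor $2$ comes from.
\item Whipple's ${}_4F_3\to{}_5F_4$ transformation with $N=k$, $b=\frac n2$, $c=1-n-k$, $d=m+n-1$, $w=m-k$ produces the series with $-\frac k2,\frac{1-k}{2}$. Here the parameter $\frac n2$ cancels between numerator and denominator. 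The duplication formula only enters afterwards, in the positivity argument.
\end{itemize}
That positivity argument is fine, and so is your reverse inclusion via \eqref{Kampee}, which the paper leaves implicit.

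One caution if you carry the computation out. The paper's intermediate formula \eqref{semi_final} and a direct check at $n=3$, $k=0$, $m=1$ both give a double-pole coefficient of $-\frac{16}{5}$ there. That corresponds to the overall sign $(-1)^{\frac{n-1}{2}}$, not $(-1)^{\frac{n+1}{2}}$, so do not try to force the stated sign. Since the sign is independent of $m$, the conclusion about $\mathcal W^d_2$ is unaffected.
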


\begin{proof}
    When $n$ is odd, we have 
    \begin{align} 
I_m(s) &= \frac{\Gamma(n-1 + m)^2 \, \Gamma(n+s) \, \Gamma(s+\frac{n}{2}+1) \, \Gamma(s+2n-1) \, \Gamma(1 - m - \frac{n}{2})}{\Gamma(n-1)^2\,\Gamma(m+n+s) \, \Gamma(s - m + \frac{n}{2} + 1) \, \Gamma(1 - \frac{n}{2}) \, \Gamma(m + 2n + s - 1)} \nonumber \\
&\qquad \quad \times {}_4F_3 \left( \begin{matrix} n + s, \, m + n - 1, \, m + s + \frac{3n}{2} - 1, \, m \\ m + n + s, \, m + \frac{n}{2}, \, m + 2n + s - 1 \end{matrix} ; 1 \right) \nonumber \\[1em]
& + \frac{\Gamma(\frac{n}{2}) \, \Gamma(n-1+m)\, \Gamma(s+2n-1) \, \Gamma(m+\frac{n}{2}-1) \, \Gamma(s+n)}{\Gamma(n-1)\,\Gamma(m) \, \Gamma(n-1) \, \Gamma(s+\frac{3n}{2}) \, \Gamma(m+s+\frac{3n}{2}-1)} \nonumber \\
&\qquad \quad \times {}_4F_3 \left( \begin{matrix} \frac{n}{2}, \, s - m + \frac{n}{2} + 1, \, 1 - \frac{n}{2}, \, s + n \\ s + \frac{3n}{2}, \, 2 - m - \frac{n}{2}, \, s + \frac{n}{2} + 1 \end{matrix} ; 1 \right) \nonumber.
\end{align}
Note that as $s$ approaches $-2n +1 -k$ the only singular terms are $\Gamma(s + 2n -1)$ and $\Gamma(n+s)$, with $\lim_{s \to -2n+1 - k} (s + 2n-1 +k)\,\Gamma(s + 2n -1) = \frac{(-1)^k}{k!}$ and $\lim_{s \to -2n+1 - k} (s + 2n-1 +k)\,\Gamma(n+s) = \frac{(-1)^{n+k+1}}{(n+k-1)!}$. 
Hence $ \lim_{s \to -2n+1 - k} (s + 2n-1 +k)^2 I_m(s)$ is continuous with respect to $m$, thus we can introduce small perturbation $m \to m + \epsilon$ and we see that $\lim_{s \to -2n+1-k} (s+2n-1+k)^2 I_{m+\epsilon}(s)$ is equal to
\begin{align}\nonumber
\frac{(-1)^{n+1}}{k!\,(n+k-1)!}&
\Bigg[
\frac{
\Gamma(n-1 + m + \epsilon)^{2}\,
\Gamma\!\left(2-\frac{3n}{2}-k\right)\,
\Gamma\!\left(1-m-\epsilon-\frac{n}{2}\right)
}{\Gamma(n-1)^2
\Gamma\!\left(m+\epsilon-n+1-k\right)\,
\Gamma\!\left(2-\frac{3n}{2}-k-m-\epsilon\right)\,
\Gamma\!\left(1-\frac{n}{2}\right)\,
\Gamma\!\left(m+\epsilon-k\right)
}
\\\label{Liche_7}
&\qquad\qquad
{}_4F_3\!\Biggl(
\begin{matrix}
1-n-k,\; m+\epsilon+n-1,\; m+\epsilon-\frac{n}{2}-k,\; m+\epsilon \\[4 pt]
m+\epsilon-n+1-k,\; m+\epsilon+\frac{n}{2},\; m+\epsilon-k
\end{matrix}
\,;\,1\Biggr)
\\\nonumber
&\qquad\quad+
\frac{
\Gamma\!\left(\frac{n}{2}\right)\,
\Gamma(n-1 +m + \epsilon)\,
\Gamma\!\left(m+\epsilon+\frac{n}{2}-1\right)
}{ \Gamma(n-1) \,
\Gamma(m+\epsilon)\,
\Gamma(n-1)\,
\Gamma\!\left(1-\frac{n}{2}-k\right)\,
\Gamma\!\left(m+\epsilon-\frac{n}{2}-k\right)
}\\\nonumber
&\qquad\qquad
{}_4F_3\!\Biggl(
\begin{matrix}
\frac{n}{2},\; 2-\frac{3n}{2}-k-m-\epsilon,\; 1-\frac{n}{2},\; 1-n-k \\[4 pt]
1-\frac{n}{2}-k,\; 2-m-\epsilon-\frac{n}{2},\; 2-\frac{3n}{2}-k
\end{matrix}
\,;\,1\Biggr)
\Bigg].\\\label{Liche_8}
\end{align}
First we show that the above two terms are identical.
\\
We apply \eqref{Bailey} to the first $_4F_3$ in \eqref{Liche_7} with parameters
\begin{align*}
N &= n +k -1, \quad a = m + \epsilon + n -1, \quad b = m+\epsilon, \quad c = m + \epsilon - \frac{n}{2} - k\\[4 pt]
& d = m + \epsilon, \quad e = m+ \epsilon - n +1 -k, \quad f = m +\epsilon + \frac{n}{2},
\end{align*}
and we see that
\begin{align}\nonumber
&{}_4F_3\!\Biggl(
\begin{matrix}
1-n-k,\; m+\epsilon+n-1,\; m+\epsilon-\frac{n}{2}-k,\; m+\epsilon \\[4 pt]
m+\epsilon-n+1-k,\; m+\epsilon+\frac{n}{2},\; m+\epsilon-k
\end{matrix}
\,;\,1\Biggr) \\\nonumber
&= 
\frac{(2-2n-k)_{n+k-1} \left(1-\frac{n}{2}\right)_{n+k-1}}{(m+\epsilon-n+1-k)_{n+k-1} \left(m+\epsilon+\frac{n}{2}\right)_{n+k-1}} \\\nonumber
&\qquad\qquad \times {}_4F_3\!\Biggl(
\begin{matrix}
1-n-k,\; m+\epsilon+n-1,\; -k,\; \frac{n}{2} \\[4 pt]
m+\epsilon-k,\; n,\; 1-\frac{n}{2}-k
\end{matrix}
\,;\,1\Biggr).\\\label{nevim}
\end{align}
Note that $(2-2n-k)_{n+k-1} = (-1)^{n+k-1}\frac{(2n+k-2)!}{(n-1)!}$ and similarly 

\begin{align*}
\left(1-\frac{n}{2}\right)_{n+k-1} &= \frac{\Gamma(k+\frac{n}{2})}{\Gamma(1-\frac{n}{2})}, \quad (m+\epsilon -n+1 -k)_{n+k-1} = \frac{\Gamma(m+\epsilon)}{\Gamma(m+\epsilon -n+1-k)},\\& \quad \left(m+\epsilon + \frac{n}{2} \right)_{n+k-1} = \frac{\Gamma(m+\epsilon + \frac{3n}{2} +k -1)}{\Gamma(m+\epsilon + \frac{n}{2})}.
\end{align*}
Substituting this back into \eqref{Liche_7} and grouping the Gamma functions in more convenient way, we see that \eqref{Liche_7} is equal to
\begin{align*}
    (-1)^{n+k-1}&\frac{(2n+k-2)!}{(n-1)!}\,
    \frac{\Gamma(k+\frac{n}{2})}{\Gamma(1-\frac{n}{2})^2}\,\frac{\Gamma(n-1 + m + \epsilon)^{2}\,}
    {\Gamma\!\left(m+\epsilon-k\right)\,}\\
    & \times \frac{ \,\Gamma(m+\epsilon + \frac{n}{2})\Gamma\!\left(1-m-\epsilon-\frac{n}{2}\right)}{\Gamma\!\left(2-\frac{3n}{2}-k-m-\epsilon\right)\,\Gamma(m+\epsilon + \frac{3n}{2} +k -1)\,}
    \frac{
\Gamma\!\left(2-\frac{3n}{2}-k\right)\,
}{\Gamma(n-1)^2
\Gamma(m+\epsilon)\,}
\\\nonumber
&\qquad\qquad\times
{}_4F_3\!\Biggl(
\begin{matrix}
1-n-k,\; m+\epsilon+n-1,\; -k,\; \frac{n}{2} \\
m+\epsilon-k,\; n,\; 1-\frac{n}{2}-k
\end{matrix}
\,;\,1\Biggr),
\end{align*}
and from \eqref{lagrange} it follows that fourth fraction is equal to $(-1)^k$.\\

We also apply Bailey's transformation to  \eqref{Liche_8} with parameters
\begin{align*}
    N &= n + k - 1, \quad a = \frac{n}{2}, \quad b = 2 - \frac{3n}{2} - k - m - \epsilon, \quad c = 1 - \frac{n}{2} \\
    &d = 1 - \frac{n}{2} - k, \quad e = 2 - m - \epsilon - \frac{n}{2}, \quad f = 2 - \frac{3n}{2} - k.
\end{align*}
Hence the $_4F_3$ in \eqref{Liche_8} is equal to 
\begin{align*}
&\frac{\left(2-m-\epsilon-n\right)_{n+k-1} \left(2-2n-k\right)_{n+k-1}}{\left(2-m-\epsilon-\frac{n}{2}\right)_{n+k-1} \left(2-\frac{3n}{2}-k\right)_{n+k-1}} 
\,{}_4F_3\Biggl( 
\begin{matrix} 
1-n-k, & \frac{n}{2}, & -k, & n+m+\epsilon-1 \\[4 pt]
1-\frac{n}{2}-k, & m+\epsilon-k, & n 
\end{matrix} 
;\, 1 \Biggr)\\
& \qquad \qquad = (-1)^{n+k-1}\frac{(2n+k-2)!}{(n-1)!}\frac{\Gamma(1-m-\epsilon+k) \, \Gamma\left(2-m-\epsilon-\frac{n}{2}\right) \, \Gamma\left(2-\frac{3n}{2}-k\right)}{\Gamma(2-m-\epsilon-n) \, \Gamma\left(1-m-\epsilon+\frac{n}{2}+k\right) \, \Gamma\left(1-\frac{n}{2}\right)}\\&\qquad \qquad \times {}_4F_3\Biggl( 
\begin{matrix} 
1-n-k, & \frac{n}{2}, & -k, & n+m+\epsilon-1 \\[4 pt]
1-\frac{n}{2}-k, & m+\epsilon-k, & n 
\end{matrix} 
;\, 1 \Biggr).
\end{align*}
Again, substituting this into \eqref{Liche_8} and grouping Gamma functions we obtain 
\begin{align*}
    (-1)^{n+k-1}&\frac{(2n+k-2)!}{(n-1)!} \frac{\Gamma\!\left(\frac n2\right)}
{\Gamma\!\left(1-\frac n2\right)\Gamma\!\left(1-\frac n2-k\right)} \frac{\Gamma(1-m-\epsilon + k)\,\Gamma(n-1+m+\epsilon)}{\Gamma(2-m-\epsilon - n)} \\
    & \times \frac{\Gamma(2-m-\epsilon - \frac{n}{2})\, \Gamma(m+\epsilon + \frac{n}{2} - 1)}{\Gamma(1-m-\epsilon + \frac{n}{2} +k)\, \Gamma(m+\epsilon - \frac{n}{2} -k)}\,\frac{\Gamma(2 - \frac{3n}{2} - k)}{\Gamma(n-1)^2\, \Gamma(m+\epsilon)} \\ &\qquad \qquad \times {}_4F_3\Biggl( 
\begin{matrix} 
1-n-k, & \frac{n}{2}, & -k, & n+m+\epsilon-1 \\ 
1-\frac{n}{2}-k, & m+\epsilon-k, & n 
\end{matrix} 
;\, 1 \Biggr).
\end{align*}
Again, using \eqref{lagrange} we have

$$
\frac{\Gamma\!\left(\frac n2\right)}
{\Gamma\!\left(1-\frac n2\right)\Gamma\!\left(1-\frac n2-k\right)}
=
(-1)^k\,
\frac{\Gamma\!\left(k+\frac n2\right)}
{\Gamma\!\left(1-\frac n2\right)^2},
$$
and similarly
\begin{equation*}
    \frac{\Gamma(1-m-\epsilon + k)}{\Gamma(2-m-\epsilon - n)} \cdot \Gamma(n-1+m+\epsilon) = (-1)^{k}
    \frac{\Gamma(n-1 + m + \epsilon)^{2}\,}
    {\Gamma\!\left(m+\epsilon-k\right)\,},
\end{equation*}
\begin{equation*} 
    \frac{\Gamma(2-m-\epsilon - \frac{n}{2})\, \Gamma(m+\epsilon + \frac{n}{2} - 1)}{\Gamma(1-m-\epsilon + \frac{n}{2} +k)\, \Gamma(m+\epsilon - \frac{n}{2} -k)} = (-1)^k.
\end{equation*}
Thus we see that 
\begin{align}\nonumber
    \lim_{s \rightarrow -2n +1 -k}(s + 2n-1+k)^2\,I_{m + \epsilon}(s) &= \frac{2}{k!\,(n+k-1)!}\,\frac{(2n+k-2)!}{(n-1)!}\,
    \frac{\Gamma(k+\frac{n}{2})}{\Gamma(1-\frac{n}{2})^2}\,\frac{\Gamma(n-1 + m + \epsilon)^{2}\,}
    {\Gamma\!\left(m+\epsilon-k\right)\,}\\
    & \times 
    \frac{
\Gamma\!\left(2-\frac{3n}{2}-k\right)\,
}{\Gamma(n-1)^2
\Gamma(m+\epsilon)\,}
{}_4F_3\!\Biggl(
\begin{matrix}
1-n-k,\; m+\epsilon+n-1,\; -k,\; \frac{n}{2} \\[4 pt]
m+\epsilon-k,\; n,\; 1-\frac{n}{2}-k
\end{matrix}
\,;\,1\Biggr)\label{semi_final}.
\end{align}
Now, we may apply a transformation due to Whipple (see~\cite{erdelyi1}, Eq.~(9), p. 189):
\begin{equation*}
{}_4F_3\!\left(
\begin{matrix}
-N,\, b,\, c,\, d \\[4 pt]
1-N-b,\, 1-N-c,\, w
\end{matrix}
; 1
\right)
=
\frac{(w-d)_N}{(w)_N}\,
{}_5F_4\!\left(
\begin{matrix}
d,\, 1-N-b-c,\, -\tfrac12 N,\, \tfrac12-\tfrac12 N,\, 1-N-w \\[4 pt]
1-N-b,\, 1-N-c,\, \tfrac12(1+d-w-N),\, 1+\tfrac12(d-w-N)
\end{matrix}
; 1
\right)
\end{equation*}
to \eqref{semi_final} with parameters
$$
N=k,
\qquad
b=\frac n2,
\qquad
c=1-n-k,
\qquad
d=m+\epsilon+n-1,
\qquad
w=m+\epsilon-k.
$$
So the $_4F_3$ in \eqref{semi_final} is equal to 
\begin{equation}\label{final_final}
\frac{(1-n-k)_k}{(m+\epsilon-k)_k}
\,{}_4F_3\!\left(
\begin{matrix}
m+\epsilon+n-1,\; -\frac k2,\; \frac{1-k}{2},\; 1-m-\epsilon \\[4 pt]
n,\; 1-\frac n2-k,\; \frac{n+1}{2}
\end{matrix}
;1
\right).
\end{equation}
Note, that $(1-n-k)_k = (-1)^k\,\frac{(n+k-1)!}{(n-1)!}$. Substituting this into \eqref{semi_final}, cancelling Gamma functions, applying \eqref{lagrange} to the term $\Gamma(2-\frac{3n}{2}-k)$ and setting $\epsilon = 0$ we get 
\begin{align}\nonumber
\lim_{s \rightarrow -2n +1 -k}(s + 2n-1+k)^2\,I_{m}(s) &=\;(-1)^{\frac{n+1}{2}}\,
\frac{}{} \,\frac{2\pi}{k!}\,\frac{(2n+k-2)!}{(n-1)!^2}\,
\frac{\Gamma\!\left(k+\frac n2\right)}{\Gamma\!\left(1-\frac n2\right)^2}\,
\frac{\Gamma(n-1+m)^2}
{\Gamma(n-1)^2\,\Gamma(m)^2\,\Gamma\!\left(\frac{3n}{2}+k-1\right)}\,\\\label{opi}
&\qquad\times
{}_4F_3\!\left(
\begin{matrix}
m+n-1,\; -\frac k2,\; \frac{1-k}{2},\; 1-m \\[4 pt]
n,\; 1-\frac n2-k,\; \frac{n+1}{2}
\end{matrix}
;1
\right).
\end{align}
Now, the hypergeometric function $_4F_3$ is equal to
\begin{equation*}
 \sum_{j=0}^{\min\left(\left\lfloor k/2\right\rfloor,\; m-1\right)}
\frac{(m+n-1)_j\left(-\frac k2\right)_j\left(\frac{1-k}{2}\right)_j(1-m)_j}
{(n)_j\left(1-\frac n2-k\right)_j\left(\frac{n+1}{2}\right)_j}\,
\frac{1}{j!}.
\end{equation*}
For $0\leq j \leq \min\left(\left\lfloor k/2\right\rfloor,\; m-1\right)$ we 
have from the Duplication formula $(a)_j\,(a +\frac{1}{2})_j = 4^{-j}\,(2a)_{2j}$
$$
\left(-\frac{k}{2} \right)_j\, \left( \frac{1-k}{2}\right)_j = 4^{-j}(-k)_{2j} = 4^{-j} (-1)^{2j} \frac{k!}{(k-2j)!} > 0,
$$
and similarly for $j$ in this range we have 
$$
\left(1 - \frac{n}{2} -k \right)_j = (-1)^j \left(\frac{n}{2} + k - j\right)_j, \qquad (1-m)_j = (-1)^j\frac{(m-1)!}{(m-1-j)!},
$$
hence the last $_4F_3$ is positive for each $m$. Consequently \eqref{opi} does not change sign and the claim follows.
\end{proof}
\section{Asymptotic expansion of $I_m(s)$}

\begin{proposition} \label{Prop 5.1}
Suppose that \(n\) is odd, \(s+n-1\geq0\), and
$s\neq -\frac n2-k$ for all $k =0,1,2\dots$.\\
Then
\begin{equation}\label{vysledek}
\lim_{m\to\infty} m^{s+1} I_m(s)
=
\Gamma\!\left(
\begin{matrix}
\frac n2,\; s+2n-1,\; 1+\frac{s+n}{2},\; s+\frac n2+1,\; \frac{s+n}{2}
\\[4 pt]
n-1,\; n-1,\; s+n+1,\; n+\frac s2,\; 1+\frac s2
\end{matrix}
\right).
\end{equation}
If \(n\) is even, the same limit holds without the additional assumption \(s+n-1\geq0\).
\end{proposition}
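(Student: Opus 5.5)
We start from the representation of Proposition~\ref{Pro 3.1} and treat the two summands separately as $m\to\infty$. Call the first summand in \eqref{liche} $A_m(s)$ (it occurs only for odd $n$) and the second, common summand $B_m(s)$. The plan is to show that $m^{s+1}B_m(s)$ converges to the right-hand side of \eqref{vysledek}, and that $m^{s+1}A_m(s)\to0$ when $s+n-1\ge 0$. This explains why the extra hypothesis appears only for odd $n$.

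\emph{The term $B_m(s)$.} Using \eqref{odhad}, the Gamma prefactor behaves like
$$(n-1)_m^2\,\frac{\Gamma(m+\frac n2-1)}{\Gamma(m)\Gamma(m+n-1)\Gamma(m+s+\frac{3n}{2}-1)}\sim \frac{m^{\,2n-4+\frac n2-1}\,m^{\,1-\frac n2-s-\frac{3n}{2}+1}}{\Gamma(n-1)^2}\cdot\frac{\Gamma(\frac n2)\Gamma(s+2n-1)\Gamma(s+n)}{\Gamma(s+\frac{3n}2)}.$$
Up to this constant, that is $m^{-s-1}$. In the ${}_4F_3$, the parameters $s-m+\frac n2+1$ and $2-m-\frac n2$ both tend to $-\infty$, and their ratio in each term tends to $1$: $(s-m+\frac n2+1)_\ell/(2-m-\frac n2)_\ell\to1$ for fixed $\ell$. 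The ${}_4F_3$ therefore formally tends to
$${}_3F_2\!\left(\begin{matrix}\frac n2,\,1-\frac n2,\,s+n\\ s+\frac{3n}2,\,s+\frac n2+1\end{matrix};1\right).$$
This series is balanced-free with excess $s+\frac{3n}{2}+s+\frac n2+1-\frac n2-1+\frac n2-s-n = s+n$. Alternatively, one can notice that it is a Saalschützian-type sum evaluable by Watson's or Whipple's ${}_3F_2$ theorem: the parameters $a=\frac n2$ and $b=1-\frac n2$ satisfy $a+b=1$, and $e+f=2c+1$ with $c=s+n$, so Whipple's formula applies and gives
$$\frac{\pi\,\Gamma(e)\Gamma(f)}{2^{2c-1}\Gamma(\frac{e+a}{2})\Gamma(\frac{e+b}{2})\Gamma(\frac{f+a}{2})\Gamma(\frac{f+b}{2})}.$$
Applying the duplication formula should then turn this into the quotient $\Gamma(1+\frac{s+n}2)\Gamma(\frac{s+n}2)/(\Gamma(n+\frac s2)\Gamma(1+\frac s2))$ together with the factors $\Gamma(s+\frac n2+1)$ and $\Gamma(s+n+1)$ in \eqref{vysledek}. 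The exclusion $s\ne-\frac n2-k$ is exactly what keeps $s+\frac n2+1$ from being a nonpositive integer.

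To justify passing the limit $m\to\infty$ inside the series, we use dominated convergence. For even $n$ the series terminates at $\ell=\frac n2-1$, so there is nothing to prove. For odd $n$ we need a uniform bound
$$\left|\frac{(s-m+\frac n2+1)_\ell}{(2-m-\frac n2)_\ell}\right|\le C\,\ell^{\,s+n-1}\quad\text{or similar}.$$
Such a bound can be obtained by writing the ratio as a quotient of Gamma functions via reflection, namely $\Gamma(m+\frac n2-1)\Gamma(m-\ell-s-\frac n2)/(\Gamma(m+\frac n2-1-\ell)\Gamma(m-s-\frac n2))$. The terms themselves are $O(\ell^{-s-n-1})$, so the sum converges.

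\emph{The term $A_m(s)$.} Its prefactor contains $\Gamma(1-m-\frac n2)/\Gamma(s-m+\frac n2+1)$. By reflection this is, up to a bounded oscillating sine ratio, $\Gamma(m-s-\frac n2)/\Gamma(m+\frac n2)\sim m^{-s-n}$. The remaining factor $(n-1)_m^2/((n+s)_m\Gamma(m+2n+s-1))$ is of order $m^{\,2(n-2)}\cdot m^{\,1-n-s}\,\Gamma(m)/\Gamma(m+2n+s-1)$. Collecting all powers gives $A_m=O(m^{-2s-2n}\cdot F_m)$, where $F_m$ is the ${}_4F_3$, which has argument $1$ and whose excess is bounded in $m$. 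I expect $F_m$ to behave like a power $m^{\,c}$, with $c$ extracted from a Gauss-type asymptotic of its tail. The requirement $m^{s+1}A_m\to0$ should then reduce to $s+n-1>0$, or to a boundary case at equality.

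The main obstacle is this last estimate: obtaining a uniform-in-$m$ asymptotic for the non-terminating ${}_4F_3$ in $A_m$, whose parameters grow with $m$. I would first try comparing it termwise with a ${}_2F_1$ evaluated by Gauss's formula. Failing that, I would go back to the Kampé de Fériet representation \eqref{Kampee} and bound the double series directly, in the same way as the estimate used after \eqref{Poslední3F2}.
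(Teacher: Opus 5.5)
\noindent Your treatment of $B_m(s)$ follows the paper: the prefactor is $\frac{\Gamma(\frac n2)\Gamma(s+2n-1)\Gamma(s+n)}{\Gamma(n-1)^2\Gamma(s+\frac{3n}{2})}\,m^{-s-1}(1+\mathcal{O}(1/m))$, the ${}_4F_3$ tends termwise to the ${}_3F_2$, and a domination (Tannery) argument justifies the limit. Summing that ${}_3F_2$ by Whipple's theorem plus duplication, instead of the paper's Dixon's well-poised sum, is a valid alternative; both apply here and give the same quotient. The odd-$n$ domination bound, however, is only asserted. The Gamma quotient you wrote is the reciprocal of the correct one:
\begin{equation*}
\frac{(s-m+\frac n2+1)_\ell}{(2-m-\frac n2)_\ell}=\frac{\Gamma(m-s-\frac n2)\,\Gamma(m+\frac n2-1-\ell)}{\Gamma(m-s-\frac n2-\ell)\,\Gamma(m+\frac n2-1)}.
\end{equation*}
For $\ell>m$ its arguments become negative, so a uniform bound does not follow directly. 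The paper instead writes the ratio as $\prod_{j<\ell}\bigl(1+\frac{s+n-1}{j+2-m-\frac n2}\bigr)$. The positive denominators run through $\frac12,\frac32,\dots$ and contribute at most $C(1+\ell)^{s+n-1}$. The negative ones give factors of modulus at most $1$, apart from boundedly many. This is exactly where $s+n-1\ge 0$ is used, so the hypothesis belongs to the $B_m$ step, not to $A_m$ as your opening suggests.

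The genuine gap is $A_m(s)$: you never prove $m^{s+1}A_m(s)\to 0$, and this hinges on the size of its ${}_4F_3$, your $F_m$. Do not expect $F_m$ to be bounded. The paper asserts $F_m=\mathcal{O}(1)$ via the large-parameter expansion, but that expansion needs one more $m$-dependent lower parameter than upper ones, whereas $F_m$ has three of each at $z=1$. Indeed, for fixed $\ell$ its terms tend to $(n+s)_\ell/\ell!$, and $\sum_\ell (n+s)_\ell/\ell!$ diverges for $n+s>0$. The missing estimate is nonetheless elementary. For $s+n>0$ and $m$ large, all terms are positive and equal $\frac{(n+s)_\ell}{\ell!}\,\frac{G(m+\ell)}{G(m)}$, where by \eqref{odhad}
\begin{equation*}
G(x)=\frac{\Gamma(x+n-1)\,\Gamma(x+s+\frac{3n}{2}-1)\,\Gamma(x)}{\Gamma(x+n+s)\,\Gamma(x+\frac n2)\,\Gamma(x+2n+s-1)}=x^{-n-s-1}\bigl(1+\mathcal{O}(1/x)\bigr).
\end{equation*}
Hence $G(m+\ell)/G(m)\le C\bigl(\frac{m}{m+\ell}\bigr)^{n+s+1}$ uniformly in $\ell$, and
\begin{equation*}
F_m\le C\sum_{\ell\ge 0}(1+\ell)^{n+s-1}\Bigl(\frac{m}{m+\ell}\Bigr)^{n+s+1}=\mathcal{O}(m^{n+s}),
\end{equation*}
and this order is sharp. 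After reflection, the prefactor of $A_m$ is $\mathcal{O}(m^{-2n-3s-1})$. Your count loses a power: $(n-1)_m\sim m^{n-1}\Gamma(m)/\Gamma(n-1)$, not $m^{n-2}\Gamma(m)$. Therefore $m^{s+1}A_m(s)=\mathcal{O}(m^{-n-s})\to 0$. This needs only $s+n>0$, not $s+n-1>0$ as you guessed. With this estimate and an actual proof of the domination bound for $B_m$, your outline becomes complete.
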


\begin{proof}
Suppose that $n$ is odd and $s+n-1 \geq 0$, then we know, from Proposition~\ref{Pro 3.1} that $I_m(s)$ is equal to
\begin{align} 
I_m(s) &= \frac{\Gamma(n-1 + m)^2 \, \Gamma(n+s) \, \Gamma(s+\frac{n}{2}+1) \, \Gamma(s+2n-1) \, \Gamma(1 - m - \frac{n}{2})}{\Gamma(n-1)^2\,\Gamma(m+n+s) \, \Gamma(s - m + \frac{n}{2} + 1) \, \Gamma(1 - \frac{n}{2}) \, \Gamma(m + 2n + s - 1)} \label{liche_12} \\
&\qquad \quad \times {}_4F_3 \left( \begin{matrix} n + s, \, m + n - 1, \, m + s + \frac{3n}{2} - 1, \, m \\[4 pt] m + n + s, \, m + \frac{n}{2}, \, m + 2n + s - 1 \end{matrix} ; 1 \right) \nonumber \\[1em]
& + \frac{\Gamma(\frac{n}{2}) \, \Gamma(n-1+m) \, \Gamma(s+2n-1) \, \Gamma(m+\frac{n}{2}-1) \, \Gamma(s+n)}{\Gamma(m) \, \Gamma(n-1)^2 \, \Gamma(s+\frac{3n}{2}) \, \Gamma(m+s+\frac{3n}{2}-1)} \label{liche_2} \\
&\qquad \quad \times {}_4F_3 \left( \begin{matrix} \frac{n}{2}, \, s - m + \frac{n}{2} + 1, \, 1 - \frac{n}{2}, \, s + n \\[4 pt] s + \frac{3n}{2}, \, 2 - m - \frac{n}{2}, \, s + \frac{n}{2} + 1 \end{matrix} ; 1 \right) \nonumber.
\end{align}
We first examine the behaviour of the first factor. We may use \eqref{lagrange} to write
\[
\frac{\Gamma\!\left(1-m-\frac n2\right)}
{\Gamma\!\left(1-\frac n2\right)\Gamma\!\left(s-m+\frac n2+1\right)}
=
-\frac{
\Gamma\!\left(\frac n2\right)\Gamma\!\left(m-s-\frac n2\right)
}{
\pi\,\Gamma\!\left(m+\frac n2\right)
}
\,\sin\!\left(\pi\left(s+\frac n2\right)\right),
\]
thus the prefactor of \eqref{liche_12} is equal to 
\begin{align*}
&\sin\!\left(\pi\left(s+\frac n2\right)\right)\frac{\Gamma(n-1 + m)^2 \, \Gamma(n+s) \, \Gamma(s+\frac{n}{2}+1) \, \Gamma(s+2n-1) \,\Gamma\!\left(\frac n2\right)\Gamma\!\left(m-s-\frac n2\right)}{\pi\Gamma(n-1)^2\,\Gamma(m+n+s) \, \Gamma(m + 2n + s - 1)\,\Gamma\!\left(m+\frac n2\right)} \label{liche_1}.
\end{align*}
Now, using \eqref{odhad} the prefactor of \eqref{liche_12} is, for large $m$, equal to 
$$
\sin\!\left(\pi\left(s+\frac n2\right)\right)
\frac{\Gamma(n+s)\,\Gamma\!\left(s+\frac n2+1\right)\,\Gamma(s+2n-1)\,\Gamma\!\left(\frac n2\right)}
{\pi\,\Gamma(n-1)^2}
\, m^{-2n-3s-1} + \,\mathcal{O}(m^{-2n-3s-2}).
$$
To analyse the asymptotic behaviour of the $_4F_3$ in \eqref{liche_12} we invoke the following asymptotic expansion for large parameters (see~\cite{olver2010nist}, Eq.~(16.11.10), p. 412):
\begin{multline}
{}_{{p+1}}F_{p}\!\left(
\begin{matrix}
a_1+r,\ldots,a_{k-1}+r,a_k,\ldots,a_{p+1} \\
b_1+r,\ldots,b_k+r,b_{k+1},\ldots,b_p
\end{matrix}
; z
\right) \\
=
\sum_{i=0}^{\alpha-1}
\frac{
(a_1+r)_i \cdots (a_{k-1}+r)_i (a_k)_i \cdots (a_{p+1})_i
}{
(b_1+r)_i \cdots (b_k+r)_i (b_{k+1})_i \cdots (b_p)_i
}
\frac{z^i}{i!}
+ \mathcal{O}\!\left(\frac{1}{r^\alpha}\right),
\end{multline}
valid for any non-negative integer $\alpha$ and $k$ from 1 to $p$. Applying this to \eqref{liche_12}, with $p = 3$ and $\alpha = 1$ we see that the $_4F_3$ is equal to $\mathcal{O}(1)$.
Thus \eqref{liche_12} is equal to $\mathcal{O}(m^{-2n-3s - 1})$.\\
Again, using estimate \eqref{odhad} we see that the Gamma prefactor of \eqref{liche_2} is approximately equal to 
$$
\frac{\Gamma\!\left(\frac{n}{2}\right)\Gamma(s+2n-1)\Gamma(s+n)}
{\Gamma(n-1)^2\Gamma\!\left(s+\frac{3n}{2}\right)}
\, m^{-s-1}\biggl(1+\mathcal{O}\left(\frac{1}{m}\right)\biggr).
$$
Now, the second hypergeometric function is equal to 
\begin{equation} \label{druhy_clen}
\begin{aligned}
{}_4F_3 \left(
\begin{matrix}
\frac{n}{2}, \, s - m + \frac{n}{2} + 1, \, 1 - \frac{n}{2}, \, s + n \\
s + \frac{3n}{2}, \, 2 - m - \frac{n}{2}, \, s + \frac{n}{2} + 1
\end{matrix}
; 1 \right)
=
\sum_{k=0}^{\infty}
\frac{
\left(\frac{n}{2}\right)_k
\left(s - m + \frac{n}{2} + 1\right)_k
\left(1 - \frac{n}{2}\right)_k
(s+n)_k
}{
\left(s + \frac{3n}{2}\right)_k
\left(2 - m - \frac{n}{2}\right)_k
\left(s + \frac{n}{2} + 1\right)_k
}
\frac{1}{k!}.
\end{aligned}
\end{equation}
We now provide an upper bound, independent of $m$, for \eqref{druhy_clen}.\\ Notice that
\begin{align*}
\left|
\frac{
\left(s-m+\frac n2+1\right)_k
}{
\left(2-m-\frac n2\right)_k
}
\right|
&=
\prod_{j=0}^{k-1}
\left|
1+
\frac{s+n-1}{j+2-m-\frac n2}
\right|\\
&=
\prod_{\substack{0\le j\le k-1\\ j+2-m-\frac n2>0}}
\Bigg|
1+
\frac{s+n-1}{j+2-m-\frac n2}
\Bigg|\,
\prod_{\substack{0\le j\le k-1\\ j+2-m-\frac n2<0}}
\Bigg|
1+
\frac{s+n-1}{j+2-m-\frac n2}
\Bigg|\\
&\leq
\prod_{\ell=0}^{k-1}
\Bigg|
1+
\frac{s+n-1}{\ell+\frac{1}{2}}
\Bigg|\,
\prod_{\substack{0\le j\le k-1\\ j+2-m-\frac n2<0}}
\Bigg|
1+
\frac{s+n-1}{j+2-m-\frac n2}
\Bigg|.
\end{align*}
Using the estimate $\log(1+x) < x$ for $x>0$, we get
\begin{align*}
\log
\prod_{\ell=0}^{k-1}
\left(
1+
\frac{s+n-1}{\ell+\frac12}
\right)
&=
\sum_{\ell=0}^{k-1}
\log
\left(
1+
\frac{s+n-1}{\ell+\frac12}
\right)\\
&\le
(s+n-1)
\sum_{\ell=0}^{k-1}
\frac{1}{\ell+\frac12}\\
&\leq
(s+n-1)\log(1+k)+C,
\end{align*}
for $s+n-1\geq0$. Thus, the first product can be bounded by
$$
C(s,n)(1+k)^{s+n-1}.
$$
Now
$$
\prod_{\substack{0\le j\le k-1\\ j+2-m-\frac n2<0}}
\Bigg|
1+
\frac{s+n-1}{j+2-m-\frac n2}
\Bigg|
=
\prod_{\substack{0\le j\le k-1\\ j+2-m-\frac n2<0}}
\Bigg|
1-
\frac{s+n-1}{\left|j+2-m-\frac n2\right|}
\Bigg| < C'(n,s).
$$
Indeed, whenever 
$
s+n-1 < \left|j+2-m-\frac n2\right|,
$
the corresponding factor is bounded by $1$. Hence, it remains only to consider those indices $j$ for which
$
\left|j+2-m-\frac n2\right| \leq s+n-1.
$
There are only finitely many such indices, at most
$
2\lceil s+n-1\rceil+1
$.

Hence, \eqref{druhy_clen} can be uniformly bounded from above by
\[
C
\sum_{k=0}^{\infty}
\Bigg|
\frac{
\left(\frac{n}{2}\right)_k
\left(1 - \frac{n}{2}\right)_k
(s+n)_k
}{
\left(s + \frac{3n}{2}\right)_k
\left(s + \frac{n}{2} + 1\right)_k
}
\frac{(1+k)^{s+n-1}}{k!}
\Bigg|.
\]
Using \eqref{odhad}, it follows that this sum is absolutely convergent.\\
Thus, from Tannery's theorem \cite[p.~136, \S 49]{BRO} we have
\begin{align*}
    \lim_{m\rightarrow\infty} &{}_4F_3 \left(
\begin{matrix}
\frac{n}{2}, \, s - m + \frac{n}{2} + 1, \, 1 - \frac{n}{2}, \, s + n \\
s + \frac{3n}{2}, \, 2 - m - \frac{n}{2}, \, s + \frac{n}{2} + 1
\end{matrix}
; 1 \right)\\
&= \sum_{k=0}^{\infty} \lim_{m\rightarrow\infty}\frac{\left(s - m + \frac{n}{2} + 1\right)_k}{\left(2 - m - \frac{n}{2}\right)_k}
\frac{
\left(\frac{n}{2}\right)_k
\left(1 - \frac{n}{2}\right)_k
(s+n)_k
}{
\left(s + \frac{3n}{2}\right)_k
\left(s + \frac{n}{2} + 1\right)_k
}
\frac{1}{k!}\\
& = {}_3F_2\!\left(
\begin{matrix}
\frac n2,\; 1-\frac n2,\; s+n, \\[4pt]\label{564}
s+\frac{3n}{2},\; s+\frac n2+1\;
\end{matrix};1
\right),
\end{align*}
as 
$$\lim_{m\rightarrow\infty}\frac{\left(s - m + \frac{n}{2} + 1\right)_k}{\left(2 - m - \frac{n}{2}\right)_k}=1.$$
Therefore 
\begin{equation}\label{final_liche}
  \lim_{m \rightarrow \infty} m^{s+1}I_m(s) =  \frac{\Gamma\!\left(\frac{n}{2}\right)\Gamma(s+2n-1)\Gamma(s+n)}
{\Gamma(n-1)^2\Gamma\!\left(s+\frac{3n}{2}\right)} {}_3F_2\!\left(
\begin{matrix}
\frac n2,\; 1-\frac n2,\; s+n, \\
s+\frac{3n}{2},\; s+\frac n2+1\;
\end{matrix};1
\right).
\end{equation}
Using Dixon's formula (see~\cite{erdelyi1}, Eq.~(6), p. 189)
\begin{equation*}
{}_3F_2\!\left(
\begin{matrix}
a,\ b,\ c \\
1+a-b,\ 1+a-c
\end{matrix}
;1\right)
=
\frac{
\Gamma\!\left(1+\frac{a}{2}\right)
\Gamma(1+a-b)
\Gamma(1+a-c)
\Gamma\!\left(1-b-c+\frac{a}{2}\right)
}{
\Gamma(1+a)
\Gamma\!\left(1-b+\frac{a}{2}\right)
\Gamma\!\left(1-c+\frac{a}{2}\right)
\Gamma(1+a-b-c)
},
\end{equation*}
with $a = s + n$, $b = 1 - \frac n2$ and $c = \frac n2$, we get
\begin{align}\nonumber
    \lim_{m \rightarrow \infty} m^{s+1}I_m(s) &= \frac{\Gamma\!\left(\frac{n}{2}\right)\Gamma(s+2n-1)\Gamma(s+n)}
{\Gamma(n-1)^2\Gamma\!\left(s+\frac{3n}{2}\right)} {}_3F_2\!\left(
\begin{matrix}
\frac n2,\; 1-\frac n2,\; s+n, \\[4pt]
s+\frac{3n}{2},\; s+\frac n2+1,\;
\end{matrix};1
\right) \\\nonumber
&= \frac{\Gamma\!\left(\frac{n}{2}\right)\Gamma(s+2n-1)\Gamma(s+n)}
{\Gamma(n-1)^2\Gamma\!\left(s+\frac{3n}{2}\right)} \left(\frac{
\Gamma\!\left(1+\frac{s+n}{2}\right)
\Gamma\!\left(s+\frac{3n}{2}\right)
\Gamma\!\left(s+\frac n2+1\right)
\Gamma\!\left(\frac{s+n}{2}\right)
}{
\Gamma(s+n+1)\,
\Gamma\!\left(n+\frac s2\right)\,
\Gamma\!\left(1+\frac s2\right)\,
\Gamma(s+n)
}\right). \\\label{Final_limit}
\end{align}
When $n$ is even, the same argument shows that $m^{s+1}I_m(s)$ is equal to \eqref{Final_limit} without the restriction that $s +n-1\geq0$ as the term \eqref{liche_12} is missing and \eqref{liche_2} is uniformly convergent, thus the claim follows.
\end{proof}
We remind the reader that the spaces \(\mathcal{H}_{\#s}\) were defined in the introduction; see~\eqref{definiceH}. Moreover, \(\mathcal{H}_s\) denotes the Bergman spaces of \(H\)-harmonic functions defined in~\eqref{Bergman}.
\begin{corollary}\label{final_vysledek}
Suppose that \(n\) is odd. If \(s+n-1\geq0\) and \(s\neq -2-k\), $s\neq -\frac n2-k$ for $k \geq 0$ then
\(\mathcal{H}_{\#s}\) and \(\mathcal{H}_s\) coincide as spaces, with equivalent norms.
If \(n\) is even, the same conclusion holds without the additional assumption
$s +n-1\geq0$.
\end{corollary}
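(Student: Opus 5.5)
The plan is to derive the corollary from Proposition~\ref{Prop 5.1} together with an argument that every individual coefficient $I_m(s)$ is finite and nonzero. Norm equivalence of $\mathcal{H}_s$ and $\mathcal{H}_{\#s}$ means finding constants $0<c<C$ with
\[
c\,(m+1)^{-s-1}\le \pm I_m(s)\le C\,(m+1)^{-s-1}\qquad\text{for all } m\ge 0,
\]
with one fixed sign. Once this holds, the two norms $\sum_m \pm I_m(s)\|f_m\|^2_{\partial B^n}$ and $\sum_m (m+1)^{-s-1}\|f_m\|^2_{\partial B^n}$ are comparable term by term. So the same $H$-harmonic functions have finite norm, and the spaces coincide with equivalent norms.

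\textbf{Large $m$.} Under the stated hypotheses, Proposition~\ref{Prop 5.1} gives
\[
\lim_{m\to\infty} m^{s+1}I_m(s)=L,
\]
where $L$ is the Gamma quotient in \eqref{vysledek}. I would first check that $L$ is finite and nonzero.

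For the numerator, I would show that each Gamma factor is evaluated away from its poles.
\begin{itemize}
\item From $s+n-1\ge 0$ (odd case), the arguments $s+2n-1$, $1+\frac{s+n}{2}$ and $\frac{s+n}{2}$ are positive.
\item $s+\frac n2+1$ is not a pole because $s\neq -\frac n2-k$.
\end{itemize}
In the even case without the assumption $s+n-1\ge0$, the numerator factors may hit poles. I would treat those $s$ as excluded, since they are exactly the poles of $I_m(s)$ itself. Equivalently, $s$ is assumed not to be a pole, as is implicit in comparing the norms.

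For the denominator, $L=0$ would require a pole among
\[
\Gamma(n-1),\quad \Gamma(s+n+1),\quad \Gamma\!\left(n+\tfrac s2\right),\quad \Gamma\!\left(1+\tfrac s2\right).
\]
The condition $s\neq -2-k$ excludes precisely the poles of $\Gamma(1+\frac s2)$ at $s=-2,-4,\dots$, together with the odd negative integers. This matches the observation $I_m(-2)=1$, where the growth rate changes. The remaining denominator factors are handled by $s+n-1\ge0$ in the odd case; in the even case they lie among the excluded integer points.

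\textbf{Passing to all $m$.} With $L\neq 0$, there is an $M$ such that for $m\ge M$ the quantity $m^{s+1}I_m(s)$ lies between $|L|/2$ and $2|L|$ and has the sign of $L$. Replacing $m$ by $m+1$ changes this only by a bounded factor.

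For the finitely many $m<M$, I need $I_m(s)$ finite, nonzero, and of the same sign as $L$.
\begin{itemize}
\item Finiteness holds because $s$ is not a pole.
\item For $m=0$ we have $I_0=1$, which is the delicate case, because it forces $L>0$.
\end{itemize}

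\textbf{Main obstacle.} The hard step is the sign and non-vanishing of $I_m(s)$ for small $m$ when $s<-1$, since there is no positive integral representation there. I would first try to verify that the closed form for $L$ is positive on the admissible range, by checking the signs of the Gamma factors. If finitely many $I_m(s)$ vanish or change sign, the comparison still works for the norm restricted to $m\ge M$ plus a finite-dimensional part. So, failing a direct sign proof, I would argue that coincidence as spaces holds unconditionally, and that the constants in the norm equivalence can be chosen after absorbing the finitely many initial terms.
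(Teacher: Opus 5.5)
Your reduction is the same as the paper's: its proof is just ``direct consequence of Proposition~\ref{Prop 5.1}''. Your check that the limit $L$ in \eqref{vysledek} is finite and nonzero is correct. In the even case no extra implicit exclusion is needed, since every Gamma pole in \eqref{vysledek} lies at an integer $\le -2$, which the hypotheses already remove.

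The gap is in the step you call the main obstacle, and neither of your fixes works. First, $L$ is not positive on the admissible range. We have $\Gamma(1+\frac s2)<0$ for $s\in(-4,-2)$, while the remaining factors are positive near $s=-2$. For example, $n=4$ gives $L=\frac14(s+2)(s+5)\Gamma(s+3)<0$ on $(-3,-2)$, and $n=5$, $s=-2.7$ also gives $L<0$. The value $I_0=1$ does not force $L>0$. The constants simply have to be split off, which is how the paper's comparison is meant: \eqref{Urye} for $m\in\mathbb N$, with $I_m(s)$ possibly replaced by $-I_m(s)$.

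Second, and decisively, the finitely many indices $1\le m<M$ cannot be ``absorbed''. Equivalence with $\|\cdot\|_{\#s}$ needs every $\pm I_m(s)$, $m\ge1$, to be strictly positive for one fixed sign, and Proposition~\ref{Prop 5.1} says nothing about small $m$. This fails under the stated hypotheses:
\begin{itemize}
\item Since $s=-2$ is a regular point and $I_m(-2)=1$ for all $m$, for $n\ge4$ and admissible $s$ slightly below $-2$ you have $I_1(s)>0$ but $L(s)<0$. So $I_m(s)<0$ for all large $m$, and the form $\sum_m I_m(s)\|f_m\|^2_{\partial B^n}$ is indefinite even on $\bigoplus_{m\ge1}\mathbf{H}^m$.
\item Concretely, for $n=4$ Proposition~\ref{Prop 3.2} gives $I_m(s)=\frac{(3)_m\,[2(s+3)(s+4)+m(s+2)(s+5)]}{2(s+3)\,(s+4)_{m+1}}$.
\item At $s=-2.3$ this gives $I_1,I_2>0$ and $I_m<0$ for $m\ge3$.
\item $I_1$ vanishes at the admissible point $s=\frac{-21+\sqrt{33}}{6}\approx-2.54$.
\end{itemize}
Hence no sign argument can close this step: the statement as written is too strong, and the paper's one-line proof passes over exactly the same point.

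What Proposition~\ref{Prop 5.1} does give is $c<\pm m^{s+1}I_m(s)<C$ for $m\ge M$. This yields equality of the two spaces as sets, and equivalence of the norms on $\bigoplus_{m\ge M}\mathbf{H}^m$. Full norm equivalence requires the additional hypothesis that $I_m(s)\neq0$ has the sign of $L$ for every $m\ge1$.
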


\begin{proof}
Direct consequence of Proposition~\ref{Prop 5.1}.
\end{proof}
\section{Acknowledgement}
Here I would like to thank my supervisor Miroslav Engliš for introducing me to this topic and for his many valuable remarks. \\Research was supported by GA CR grant no. 25-18042S.


\begin{thebibliography}{9}
\bibitem{erdelyi1}
  Erdélyi, Arthur, et al. 
  \textit{Higher Transcendental Functions}. Vol. 1. 
  McGraw-Hill, 1953.

\bibitem{bailey1935}
W. N. Bailey,
\textit{Generalized Hypergeometric Series},
Cambridge Tracts in Mathematics and Mathematical Physics, No. 32,
Cambridge University Press, Cambridge, 1935.

\bibitem{andrews1999}
  G.~E. Andrews, R.~Askey, and R.~Roy,
  \textit{Special Functions},
  Encyclopedia of Mathematics and its Applications, Vol. 71,
  Cambridge University Press, Cambridge, 1999.
  
  \bibitem{pitre1996}
  S.~N. Pitre and J. Van der Jeugt,
  \textit{Transformation and summation formulas for Kamp{\'e} de F{\'e}riet series},
  Journal of Mathematical Analysis and Applications,
  Vol. 199, No. 1, pp. 276--285, 1996.

  
\bibitem{olver2010nist}
F. W. J. Olver, D. W. Lozier, R. F. Boisvert, and C. W. Clark, eds.,
\textit{NIST Handbook of Mathematical Functions}.
New York: Cambridge University Press, 2010.

\bibitem{Axler}
S. Axler, P. Bourdon, and W. Ramey,
\emph{Harmonic Function Theory},
2nd ed., Springer, 2001.

\bibitem{Stoll2016}
M. Stoll,
\textit{Harmonic and Subharmonic Function Theory on the Hyperbolic Ball}.
Cambridge University Press, 2016.
doi: 10.1017/CBO9781316341063.

\bibitem{BEY2025}
P. Blaschke, M. Engli\v{s}, and E.-H. Youssfi,
A Moebius invariant space of H-harmonic functions on the ball,
\emph{Journal of Functional Analysis} \textbf{288} (2025), no.~9, 110857.
doi: \href{https://doi.org/10.1016/j.jfa.2025.110857}{10.1016/j.jfa.2025.110857}.

\bibitem{Ureyen2023}
A.~E.~Ureyen,
``H-harmonic Bergman Projection on the Real Hyperbolic Ball,''
\emph{Journal of Mathematical Analysis and Applications},
Vol.~519, 2023.

\bibitem{Stoll2019}
M.~Stoll,
``The Reproducing Kernel and Radial Eigenfunctions of the Hyperbolic Laplacian,''
\emph{Mathematica Scandinavica},
Vol.~124, No.~1, pp.~81--101, 2019.

\bibitem{EYM}
M. Engliš and E.-H. Youssfi,
\emph{The M-harmonic Dirichlet space on the ball},
Journal of Mathematical Analysis and Applications,
vol. 545, no. 2, article 129165, 2025.
doi: \url{https://doi.org/10.1016/j.jmaa.2024.129165}.

\bibitem{Aro}
N. Aronszajn,
\emph{Theory of reproducing kernels},
Transactions of the American Mathematical Society,
vol. 68, no. 3, pp. 337--404, 1950.
doi: \url{https://doi.org/10.1090/S0002-9947-1950-0051437-7}.

\bibitem{Rossi}
M.~Vergne and H.~Rossi,
``Analytic continuation of the holomorphic discrete series of a semi-simple Lie group,''
\emph{Acta Mathematica}, vol.~136, pp.~1--59, 1976.
doi:10.1007/BF02392016.


\bibitem{BRO}
Bromwich, T.~J.~I'A. \textit{An Introduction to the Theory of Infinite Series}. 2nd ed., London: Macmillan and Co., 1926.

\end{thebibliography}
\end{document}